\numberwithin{equation}{section}
\newcommand{\Hmm}[1]{\leavevmode{\marginpar{\tiny%
			$\hbox to 0mm{\hspace*{-0.5mm}$\leftarrow$\hss}%
			\vcenter{\vrule depth 0.1mm height 0.1mm width \the\marginparwidth}%
			\hbox to
			0mm{\hss$\rightarrow$\hspace*{-0.5mm}}$\\\relax\raggedright #1}}}
\newcommand{\K}{\mathcal{K}}
\newcommand{\loc}{{\rm loc}}
\newcommand{\N}{\mathbb{N}}
\newcommand{\R}{\mathbb{R}}
\newcommand{\id}{\mathds{1}}
\newtheorem{theorem}{Theorem}[section]
\newtheorem{corollary}[theorem]{Corollary}
\newtheorem{lemma}[theorem]{Lemma}
\newtheorem{definition}[theorem]{Definition}
\newtheorem{remark}[theorem]{Remark}
\theoremstyle{definition}
\newtheorem{defin}[theorem]{Definition}
\newcommand{\dx}{\,\mathrm{d}x}
\newcommand{\dy}{\,\mathrm{d}y}
\newcommand{\dz}{\,\mathrm{d}z}
\newcommand{\dt}{\,\mathrm{d}t}
\newcommand{\dr}{\,\mathrm{d}r}
\newcommand{\be}{\begin{equation}}
\newcommand{\ee}{\end{equation}}
\newcommand{\bea}{\begin{eqnarray}}
\newcommand{\eea}{\end{eqnarray}}
\newcommand{\bean}{\begin{eqnarray*}}
	\newcommand{\eean}{\end{eqnarray*}}
\newcommand{\Real}{\mathbb{R}}
\newcommand{\opname}[1]{\mbox{\rm #1}\,}
\newcommand{\supp}{\opname{supp}}
\newlength{\wex}  \newlength{\hex}
\def\ga{\alpha}            \def\gg{\gamma}
       \def\gd{\delta}      
\def\gf{\phi}       \def\vgf{\varphi}    
            \def\gl{\lambda}
      \def\gw{\omega}
     \def\Gd{\Delta}
\def\Gw{\Omega}              
\begin{document}
	
	\title[Decay of Hardy-weights on exterior domains]{On minimal decay at infinity of Hardy-weights}
	
	
	\author {Hynek Kova\v{r}\'{\i}k}
	
	\address {Hynek Kova\v{r}\'{\i}k, DIGICAM, Sectioned di Matematica \\ Universit\`a degli Saudi di Brescia \\ Via Braze, \\38 - 25123, Brescia, Italy}
	
	\email {hynek.kovarik@unibs.it}
	
	\author{Yehuda Pinchover}
	
	\address{Yehuda Pinchover,
		Department of Mathematics, Technion - Israel Institute of
		Technology,   Haifa, Israel}
	
	\email{pincho@technion.ac.il}

	

	\begin{abstract}
	We study the behaviour of Hardy-weights for a class of variational quasi\-linear elliptic operators of $p$-Laplacian type. In particular, we
	obtain necessary sharp decay conditions at infinity on the Hardy-weights in terms of their integrability with respect to certain integral weights. 
	Some of the results are extended also to nonsymmetric linear elliptic operators.
	Applications to various examples are discussed as well.  
	
	\medskip
		
		\noindent  2000  \! {\em Mathematics  Subject  Classification.}
		Primary  \! 47J20; Secondary  35B09, 35J08, 35J20, 49J40.\\[1mm]
		\noindent {\em Keywords:} ground state, Hardy inequality, minimal growth, positive solutions.
	\end{abstract}
	
	\maketitle



	
\section{\bf Introduction}
The problem of finding a function $W\gneqq0$ such that a given nonnegative operator $L$ in a domain $\Omega\subset \R^N$ satisfies, in certain sense, the inequality 
\begin{equation} \label{hardy-intro}
L \, \geq\, W
\end{equation}
has been intensively studied in the past decades. For a detailed analysis of these so-called Hardy-type inequalities we refer to the monographs \cite{bel,kps,ok} and 
references therein. The function $W$ is usually called a Hardy-weight for the operator $L$. 

\smallskip

The aim of the present paper is to study  the  `decay' at infinity of Hardy-weights for  a class of variational quasilinear operators, the so-called $(p,A)-$Laplacians with a potential term, in external domains, see Section \ref{s-prelim} for a detailed definition. For practical  purposes, and also from the theoretical point of view, it is very natural to address the question of `how large' the weight function $W$ might be. One would of course like to make $W$ 
as large as possible in order to optimize inequality \eqref{hardy-intro}  (see \cite{dfp,dp}). However, there are certain constrains, depending on $L$ and $\Omega$, which have to 
be respected. 

 \smallskip
 
In this note we study one of such constrains, namely the behaviour of $W$ at infinity. Roughly speaking we show that the Hardy-weights cannot decay too slowly at infinity. More precisely, we establish necessary decay conditions on $W$ in terms of $L^1$ integrability of $W$ (at infinity) with respect to integral weights which are related to positive solutions of the equation $L u=0$. This is done in Section~\ref{s-critical} for critical operators (see Theorem~\ref{prop-L1}), and in Section~\ref{s-subcritical} for subcritical operators (see Theorem~\ref{prop-L2}). In Section~\ref{sec-cr-lin} we show that the results can be extended also to a certain class of linear nonsymmetric operators  (see, Theorem~\ref{thm-nonsym}). 

In Section~\ref{s-prelim} we introduce the necessary notation and recall some results previously obtained in the literature on some of which we rely in the proofs of our main theorems. The latter are formulated and proved in Section~\ref{s-main}. In the closing Section~\ref{s-examples}, we illustrate the sharpness of our decay conditions by several examples.

	\section{\bf Preliminaries} \label{s-prelim}
\subsection{Notation} \label{ss-notation}
	Let $\Gw$ be a domain in $\R^N$, $1<p<\infty$, $N\geq 2$.  Throughout the paper we use the following notation.
	\begin{itemize}
		\item We denote  by $ \bar{\infty} $ the ideal point which is added to $ \Gw $ to obtain the one-point compactification of $\Gw$. 
		
		\item  We write $\Gw_1 \Subset \Gw_2$ if the set $\Gw_2$ is open in $\Gw$, the set $\overline{\Gw_1}$ is
		compact and $\overline{\Gw_1} \subset \Gw_2$. 
		
		\item Let $g_1,g_2$ be two positive functions defined in a domain $D$. We say that $g_1$ is {\em equivalent} to $g_2$ in $D$ (and use the notation $g_1\asymp g_2$ in
		$D$) if there exists a positive constant $C$ such
		that
		$$C^{-1}g_{2}(x)\leq g_{1}(x) \leq Cg_{2}(x) \qquad \mbox{ for all } x\in D.$$
		
		\item The open ball of radius $r>0$ centered at $y$ is denoted by $$B_r(y) := \{x\in\R^N:\, |x-y| <r \}.$$
		
		\item We denote by  $\mathrm{diam} (\gw)$ the diameter of the set $\gw \subset \R^N$.
	\end{itemize}
	
\subsection{The quasilinear case}\label{subsec-quasi}
	Let $\Gw$ be a domain in $\R^N$, $1<p<\infty$, $N\geq 2$.  In this note we consider the quasilinear operator  $ L(A,V) $ acting on $W^{1,p}_\loc(\Gw)$ of the form 
	\begin{equation}  \label{L-op}
	L(A,V)  u  :=- \nabla \cdot \big(|\nabla u|_A^{p-2} A\,  \nabla u\big) +V |u|^{p-2}\, u ,
	\end{equation}
	and the following associated functional defined on $C_c^\infty(\Gw)$
	\begin{equation}\label{functional}
	Q_{A,V}[\varphi]:= 	\int_{\tilde{\Omega}}(|\nabla \varphi|^p_A+ V|\varphi|^p)\dx,
	\end{equation} 
	where $A:\Gw\to \R^{N}\times \R^{N}$ is a symmetric matrix, $V:\Gw \to \R$ is a potential function, and  
	$$
	|\xi |_{A(x)} : = \langle \xi, A(x) \, \xi \rangle^{1/2} \qquad x\in \Gw, \, \xi\in \R^N. 
	$$
	We assume that $A$ is locally bounded and locally uniformly positive definite, i.e., for any $K\Subset \Gw$  there exists  $\Lambda_K>0$  such that
	\begin{equation} \label{eq-ell}
	\Lambda_K^{-1}\, |\xi|^2 \leq |\xi |_{A(x)}^2   \leq  \Lambda_K\, |\xi|^2 \qquad \forall\, x\in K, \ \ \forall\, \xi\in\R^N,
	\end{equation} 
	and $V$ belongs to a  certain local Morrey space $M^q_{\rm loc}(p;\Gw)$  which is defined below (see \cite{pp}).
	\begin{defin} \label{def-morrey}
		Let $q\in [1,\infty]$ and let $\omega \Subset \R^N$. Given a measurable function $f:\omega\to \R$, we set 
		\begin{equation} 
		\|f\|_{M^q(\omega)} := \sup_{y\in\omega} \, \sup_{r< {\rm diam} (\omega)}\left\{r^{-N(q-1)/q} \int_{B_r(y) \cap \omega} |f|\dx\right\}.
		\end{equation}
		We write $f\in M_{\rm loc}^q(\Gw)$ if for any $\omega\Subset\Gw$ it holds $\|f\|_{M^q(\omega)} < \infty$.  We then define 
		
		\begin{equation}
		M^q_{\rm loc} (p; \Gw) : = \left\{
		\begin{array}{l@{\quad}l}
		M^q_\loc(\Omega) \ \  \text{with} \ q >\frac Np & \quad \text{if} \ \ p<N  , \\
		& \\
		L^1_{\rm loc}(\Gw)  & \quad \text{if} \ \ p >N,
		\end{array}
		\right.  
		\end{equation}
		and for $p=N$ we write $f	\in M^q_{\rm loc} (N;\Gw)$, if for some $q>N$ and any $\omega\Subset\Gw$ it holds
		\begin{equation}
		\|f\|_{M^q(N,\omega)} := \sup_{y\in\omega} \, \sup_{r< {\rm diam} (\omega)}\left\{ \log^{\frac{q(N-1)}{N}}\left(\frac{ {\rm diam} (\omega)} {r}\right) \int_{B_r(y) \cap \omega} |f|\dx\right\} < \infty.
		\end{equation}
	\end{defin}
	We are now ready to introduce our regularity hypotheses on the coefficients of the operator $L(A,V)$. Throughout the paper we assume that
	\[\tag{H0}
	\mbox{the matrix }A\mbox{ satisfies } \eqref{eq-ell},\mbox{ and the potential }V\in M^q_{\rm loc}(p;\Gw).
	\]
	In the case $1<p<2$, we make the following stronger hypothesis:
	
	\[\tag{H1}
	A\in C^{0,\gamma}_{\rm loc}(\Gw;\R^{N^2})\mbox{ satisfies } \eqref{eq-ell},\mbox{ and }V\in M^q_{\rm loc}(\Gw),\mbox{ where }q> N.
	\]

	\begin{defin}\label{def:critical}   The functional $Q_{A,V}$ is said to be
	\begin{enumerate}
	 \item {\em nonnegative} in $\Gw$ (in short,   $Q_{A,V}\geq 0$ in $\Gw$) if 
		\begin{equation}\label{ineq:nonnegative}
		Q_{A,V}[\vgf]:= 	\int_{\tilde{\Omega}}(|\nabla \vgf|^p_A+ V|\vgf|^p)\dx
		\geq
		0   \qquad \mbox{for all } \vgf\in C_c^\infty(\Gw).
		\end{equation}	
		
	\item  {\em subcritical} in $\Gw$ if there exists a nonzero nonnegative weight function $W\in  M^q_{\rm loc}(p;\Gw)$, called a {\em Hardy-weight}, such that
		\begin{equation}\label{ineq:subcriticality}
		Q_{A,V}[\vgf]
		\geq
		\int_\Gw W|\vgf|^p\dx   \qquad \mbox{for all } \vgf\in C_c^\infty(\Gw).
		\end{equation}
	\item {\em critical}  in $\Gw$ if $Q_{A,V}\geq 0$ in $\Gw$, but $Q_{A,V}$ does not admit any Hardy-weight.  
		
         \item  {\em supercritical} in $\Gw$ if $Q_{A,V}\not \geq 0$  in $\Gw$ (that is, there exists $\vgf\in C_c^\infty(\Gw)$ such that $Q_{A,V}[\vgf]<0$).
         \end{enumerate}
	\end{defin}
	
\begin{defin}
We say that the {\em operator $L(A,V)$ is nonnegative in $\Gw$} if the equation $L(A,V)w=0$ in $\Gw$ admits a positive weak supersolution  $\tilde u\in W_{\loc}^{1,p}(\Gw)$. 
\end{defin}

First, we recall the following Allegretto-Piepenbrink-type theorem.
	\begin{theorem}[{Allegretto-Piepenbrink-type theorem \cite[Theorem~4.3]{pp}}]\label{theorem:AP} Suppose  $A$ and $V$ satisfy hypothesis {\em(H0)}. Then the following assertions are equivalent:
		\begin{itemize}
			\item  The functional $Q_{A,V}$ is nonnegative on $C_c^\infty(\Gw)$.
			
			\item The equation $L(A,V)w=0$ in $\Gw$ admits a positive solution  $v\in W_{\loc}^{1,p}(\Gw)$.
			\item The equation $L(A,V)w=0$ in $\Gw$ admits a positive supersolution  $\tilde v\in W_{\loc}^{1,p}(\Gw)$.
		\end{itemize}
	\end{theorem}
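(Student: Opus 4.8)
The plan is to prove the equivalence by establishing the cycle of implications $(ii)\Rightarrow(iii)\Rightarrow(i)\Rightarrow(ii)$, where $(i)$, $(ii)$, $(iii)$ denote the three bulleted assertions in the order listed. The implication $(ii)\Rightarrow(iii)$ is immediate: a positive solution of $L(A,V)w=0$ is in particular a positive supersolution.

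For $(iii)\Rightarrow(i)$ I would use a ground-state substitution based on a $(p,A)$-version of the Picone inequality. Given a positive supersolution $\tilde v\in W^{1,p}_{\loc}(\Gw)$ and a test function $\vgf\in C_c^\infty(\Gw)$, the idea is to insert the nonnegative test function $\phi := |\vgf|^p/\tilde v^{\,p-1}$ into the weak supersolution inequality $\int (|\nabla\tilde v|_A^{p-2}A\nabla\tilde v\cdot\nabla\phi + V\tilde v^{\,p-1}\phi)\dx\ge 0$, and to combine it with the pointwise Picone-type bound $|\nabla\tilde v|_A^{p-2}A\nabla\tilde v\cdot\nabla(|\vgf|^p/\tilde v^{\,p-1})\le |\nabla\vgf|_A^p$. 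Together these two facts yield $Q_{A,V}[\vgf]\ge 0$. The technical care needed is to justify that $\phi$ is an admissible test function: since $\tilde v$ need neither be bounded away from $0$ nor be smooth, one works with the regularization $\tilde v+\eps$, uses truncations of $\vgf$, and passes to the limit $\eps\to 0$, invoking the local Morrey regularity of $V$ to control the potential term.

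The substantial work is in $(i)\Rightarrow(ii)$, where I would argue by exhaustion and a Harnack convergence principle. Fix $x_0\in\Gw$ and an increasing sequence of smooth, relatively compact subdomains $\Gw_1\Subset\Gw_2\Subset\cdots$ with $x_0\in\Gw_1$ and $\bigcup_n\Gw_n=\Gw$. Since each $\Gw_n$ is strictly smaller than $\Gw$, strict monotonicity of the principal Dirichlet eigenvalue together with $(i)$ gives $\lambda_1(\Gw_n)>0$; hence $Q_{A,V}$ is coercive on $W_0^{1,p}(\Gw_n)$, and minimizing it over $\{u:u-1\in W_0^{1,p}(\Gw_n)\}$ produces a weak solution $v_n$ of $L(A,V)v_n=0$ in $\Gw_n$ with $v_n=1$ on $\partial\Gw_n$. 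Replacing $v_n$ by $|v_n|$, which does not increase $Q_{A,V}$ and preserves the boundary data, and invoking the Harnack inequality, we may take $v_n>0$. After the normalization $v_n(x_0)=1$, the Harnack inequality yields locally uniform upper and lower bounds, and the local $W^{1,p}$ (and, under (H1), $C^{1,\gamma}_{\loc}$) regularity theory for the $(p,A)$-Laplacian provides enough compactness to extract a subsequence converging locally to some $v\in W^{1,p}_{\loc}(\Gw)$. Passing to the limit in the weak formulation shows that $v$ is a positive weak solution of $L(A,V)v=0$ in $\Gw$, which is $(ii)$.

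I expect the crux to be this last implication, and specifically the compactness together with the strict positivity of the limit. The delicate issues are obtaining uniform (in $n$) Harnack constants, needed so that the normalized $v_n$ do not collapse to $0$ in the limit, and the regularity and convergence theory for the quasilinear operator with a potential $V$ only of local Morrey class, which is precisely where the hypotheses (H0) and (H1) enter. The Picone step in $(iii)\Rightarrow(i)$ is conceptually routine but still requires attention to the admissibility of the test function $|\vgf|^p/\tilde v^{\,p-1}$ in view of the low regularity of $\tilde v$ and $V$.
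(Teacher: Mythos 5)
The paper does not prove this theorem; it is quoted directly from \cite[Theorem~4.3]{pp} and used as a black box. Your outline reproduces the standard argument of that reference --- the trivial implication solution $\Rightarrow$ supersolution, a Picone/ground-state-substitution argument (with the $\tilde v+\eps$ regularization) for supersolution $\Rightarrow$ nonnegativity, and an exhaustion-plus-Harnack compactness argument for nonnegativity $\Rightarrow$ existence of a positive solution --- so it is essentially the same approach, the only cosmetic difference being that you prescribe boundary data $1$ on $\partial\Gw_n$ where the reference solves an inhomogeneous problem with a nonnegative compactly supported right-hand side.
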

	\begin{defin}\label{def:nullseq} A sequence $\{\vgf_k\}\subset C_c^\infty(\Gw)$ is called a {\em null-sequence} with respect to the nonnegative functional $Q_{A,V}$ in $\Gw$ if
		\vspace{.5em}
		
		a) $   \vgf_k\geq0$ for all $k\in\mathbb{N}$,
		\vspace{.5em}
		
		b) there exists a fixed open set $K\Subset\Gw$ such that $\|\vgf_k\|_{L^p(K)}=1$ for all $k\in\mathbb{N}$,
		\vspace{.5em}
		
		c) $\displaystyle{\lim_{k\to\infty}Q_{A,V}[\vgf_k]=0}$.
		\vspace{.5em}
		
		\noindent We call a positive function $\phi\in W_{\rm loc}^{1,p}(\Gw)$ a {\em ground state} of $Q_{A,V}$ in $\Gw$ if $\phi$ is an $L_{\rm loc}^p(\Gw)$ limit of a null-sequence.
	\end{defin}

\noindent We have \cite[Theorem~4.15]{pp}:

\begin{theorem}\label{theorem:main} Suppose that $Q_{A,V}$ is nonnegative on $C_c^\infty(\Gw)$ with $A$ and $V$  satisfying hypothesis {\em(H0)} if $p\geq2,$ or {\em (H1)} if $1<p<2$. Then $Q_{A,V}$ is critical in $\Gw$ if and only if $Q_{A,V}$ admits a null-sequence.
Moreover, in this case the equation $L(A,V)u=0$ admits (up to multiplicative constant) a unique positive supersolution $\gf$. Furthermore, $\gf$ is a ground state.
\end{theorem}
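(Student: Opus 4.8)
The plan is to reduce the whole statement to the generalized Picone (ground-state) identity attached to a fixed positive solution. By Theorem~\ref{theorem:AP}, nonnegativity of $Q_{A,V}$ furnishes a positive solution $v\in W^{1,p}_\loc(\Gw)$ of $L(A,V)v=0$, and the substitution $\vgf=v\psi$ yields a representation
\[
Q_{A,V}[\vgf]=\int_\Gw R_v(\vgf)\dx,
\]
where the Picone remainder satisfies $R_v(\vgf)\geq0$ and vanishes pointwise exactly where $\psi=\vgf/v$ is locally constant. I would treat $R_v$ as a weighted $p$-Dirichlet energy in $\psi$ and phrase the entire dichotomy in terms of it. This identity also furnishes a lower bound of $R_v$ by a weighted gradient term, which is the source of the compactness used throughout.

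With this identity the easy implication—that a null-sequence forces criticality—is handled by contradiction. Suppose $\{\vgf_k\}$ is a null-sequence while $Q_{A,V}[\vgf]\geq\int_\Gw W|\vgf|^p\dx$ holds for all $\vgf\in C_c^\infty(\Gw)$ with some $W\gneqq0$. Then $0\leq\int_\Gw W\vgf_k^p\dx\leq Q_{A,V}[\vgf_k]\to0$. Passing to the $L^p_\loc$ limit $\gf$ of a subsequence—which exists by the compactness below, satisfies $\|\gf\|_{L^p(K)}=1$, and by local Harnack and elliptic regularity is a positive solution—Fatou's lemma on each $\omega\Subset\Gw$ gives $\int_\omega W\gf^p\dx\leq\liminf_k\int_\Gw W\vgf_k^p\dx=0$, whence $W\equiv0$, a contradiction. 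Hence existence of a null-sequence implies criticality.

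For the converse I would prove the contrapositive: absence of a null-sequence implies subcriticality. The vanishing of $\int_\Gw R_v(\vgf_k)\dx$ for a normalized nonnegative sequence is precisely the statement that $\psi_k=\vgf_k/v$ is a minimizing sequence of the weighted energy with energy tending to zero; its negation yields a strictly positive coercivity constant over each fixed $K\Subset\Gw$. I would upgrade this local coercivity to a global Hardy-weight $W\gneqq0$ by the standard exhaustion-and-gluing construction of a weight from a positive solution of minimal growth, exhibiting subcriticality. Combined with the easy direction this proves the equivalence. The compactness invoked above—local uniform $W^{1,p}$ bounds for a normalized null-sequence coming from the lower bound on $R_v$, weak $L^p_\loc$ convergence, and passage to a positive solution via Harnack—is exactly the input that makes the notion of ground state well posed.

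Uniqueness and the ground-state property I would obtain together. Given the ground state $\gf$ and any positive supersolution $u$, I would evaluate the Picone remainder $R_u$ on the null-sequence converging to $\gf$; since $\int_\Gw R_u(\vgf_k)\dx\leq Q_{A,V}[\vgf_k]\to0$, the degeneracy characterization of $R_u$ forces $\vgf_k/u$, and hence $\gf/u$, to be constant, so $u=c\,\gf$. This simultaneously shows that positive supersolutions are unique up to a multiplicative constant and that each of them is the ground state. The main obstacle throughout is the compactness/convergence step in the quasilinear regime: with no linear structure available one must lean on the generalized Picone identity and on sharp local regularity (Harnack and $C^{1,\alpha}$ estimates), which is precisely why the stronger hypothesis (H1) is imposed when $1<p<2$. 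The secondary delicate point is the equivalence between vanishing of the weighted cutoff energies and nonexistence of a Hardy-weight—the capacity-type content of the dichotomy—together with the strict-convexity input behind the degeneracy analysis of $R_v$.
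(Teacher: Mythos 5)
The paper does not actually prove this statement: it is imported verbatim as \cite[Theorem~4.15]{pp}, so the only fair comparison is with the proof given there, and your outline reproduces that proof's architecture faithfully --- the generalized Picone/simplified-energy representation $Q_{A,V}[v\psi]$ attached to a positive solution $v$ from Theorem~\ref{theorem:AP}, the $W^{1,p}_{\rm loc}$-compactness and $L^p_{\rm loc}$-convergence of null-sequences to a positive solution, Fatou's lemma against a putative Hardy-weight for the easy implication, and the evaluation of the supersolution energy inequality $Q_{A,V}[\vgf_k]\geq \int_\Gw R_u(\vgf_k)\dx\to 0$ on the null-sequence to force $\gf/u$ constant for uniqueness. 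Two small remarks: in the converse direction you do not need any exhaustion-and-gluing --- the negation of ``admits a null-sequence'' gives $c_K:=\inf\{Q_{A,V}[\vgf]:0\leq\vgf\in C_c^\infty(\Gw),\ \|\vgf\|_{L^p(K)}=1\}>0$, and $W:=c_K\chi_K$ is already an admissible Hardy-weight, so subcriticality follows at once; and essentially all of the genuine difficulty sits in the two steps you defer (the two-sided bound of the Picone remainder by a weighted gradient energy, which is where hypothesis (H1) enters for $1<p<2$, and the compactness/positivity of the $L^p_{\rm loc}$ limit), which in \cite{pp} occupy the bulk of the argument.
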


If $Q_{A,V}$ is subcritical in $\Gw$, then  the set of all Hardy-weights is convex. Moreover, $W$ is an extreme point of this set if and only if $Q_{A,V-W}$ is critical. This indicates that critical Hardy-weights are rare, and in general difficult to be determined concretely. The papers \cite{dfp,dp} are devoted to the search of a class of {\em optimal} Hardy-weights, that is, Hardy-weights that are `{\em as large as possible}' in the following sense.  
	\begin{defin}\label{def_opt}
		Suppose that $Q_{A,V}\geq 0$ in $\Gw$. Assume that a nonzero nonnegative function $W$ satisfies the following Hardy-type inequality
		\begin{equation}\label{opt_hardy4}
		Q_{A,V}[\varphi] \geq \gl\int_{\Omega} W|\varphi|^p\dx \qquad \forall \varphi\in C_c^\infty(\Omega),
		\end{equation}
		with some $\gl>0$. We say that $W$ is an {\em optimal Hardy-weight} for the operator $Q_{A,V}$ in $\Gw$ if the following conditions hold true.
		\begin{itemize}
			
			\item (Criticality) The functional $Q_{A,V-W}$ is {\em critical in} $\Gw$. In particular, $Q_{A,V- W}$ admits a ground state $\gf$ in $\Omega$.
			
			\item (Null-criticality) The functional $Q_{A,V\!-\!W}$ is {\em null-critical in} $\Gw$ with respect to $W$, that is, $\gf\!\not\in\! L^p(\Gw,W\!\dx)$. 
		
		\item  (Optimality at infinity) $\gl=1$ is also the best constant for inequality \eqref{opt_hardy4} restricted to functions $\vgf$ that are compactly supported  in any fixed neighborhood of infinity in $\Gw$.
	\end{itemize}
	\end{defin}
	
	\medskip 
	
	\noindent For the $p$-Laplacian in `exterior' domains we have
	
	\begin{theorem}[{\cite[Theorem~6.1]{dp}}]\label{thm_opt_hardym}
		Let $\Gw$ be a $C^{1,\alpha}$ domain (not necessarily bounded), where $0<\alpha\leq 1$. Let $U\Subset \Omega$ be an open $C^{1,\alpha}$ subdomain of $\Omega$, and consider $\tilde{\Gw}:=\Gw\setminus U$. Denote by $\bar{\infty}$ the infinity in $\Omega$, and assume that $-\Delta_p$ admits a nonconstant positive $p$-harmonic function $\mathcal{G}$ in $\tilde{\Omega}$ satisfying the following conditions
		\begin{equation}\label{assumpt_7m}
		\lim_{x\to \partial U}\mathcal{G}(x)=\gg_1, \; \lim_{x\to\bar{\infty}}\mathcal{G}(x)= \gg_2, \end{equation}
		where $\gg_1\neq \gg_2$, and  $0\leq \gg_1,\, \gg_2 \leq \infty$. Denote $$m:=\min\{\gg_1,\gg_2\}, \qquad M:=\max\{\gg_1,\gg_2\}.$$
		
		\noindent Define positive functions $v_1$ and  $v$, and a nonnegative weight $W$ on $\tilde{\Omega}$ as follows:
		
		\medskip
		
		(a) If $M<\infty$, assume further that either $m=0$ or $p\geq 2$, and let
		$$v_1:=(\mathcal{G}-m)(M-\mathcal{G}), \qquad  v:=v_1^{(p-1)/p}=[(\mathcal{G}-m)(M-\mathcal{G})]^{(p-1)/p},$$
		and
		\be\label{wgg1gg21m}
		W:=\left(\frac{p-1}{p}\right)^p\left|\frac{\nabla \mathcal{G}}{v_1}\right|^p|m+M-2\mathcal{G}|^{p-2}\left[2(p-2)v_1+(M-m)^2\right].
		\ee
		
		(b) If $M=\infty$, define
		$$ v_1:=(\mathcal{G}-m), \qquad  v:=v_1^{(p-1)/p}=(\mathcal{G}-m)^{(p-1)/p},$$
		and
		\be\label{wgg1gg21m2}
		W:=\left(\frac{p-1}{p}\right)^p\left|\frac{\nabla \mathcal{G}}{v_1}\right|^p.
		\ee
		
		\medskip
		
		\noindent Then the following Hardy-type inequality holds true
		\begin{equation}\label{opt_hardym}
		\int_{\tilde{\Omega}}|\nabla \varphi|^p\dx \geq \int_{\tilde{\Omega}} W|\varphi|^p\dx \qquad \forall \varphi\in C_c^\infty(\tilde{\Omega}),
		\end{equation}
		and $W$ is an {\em optimal} Hardy-weight for $-\Gd_p$ in $\tilde{\Gw}$.
		
		Moreover, up to a multiplicative constant, $v$ is the unique positive supersolution of the equation $L(\id,-W)w=0$ in $\tilde{\Gw}$.
	\end{theorem}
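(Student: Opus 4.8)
The plan is to check, one at a time, the four defining properties of an optimal Hardy-weight in Definition~\ref{def_opt}, after first observing that the prescribed $W$ is manufactured precisely so that $v$ becomes an explicit positive solution of $L(\id,-W)w=0$. The key starting point is that $v_1$ and $v=v_1^{(p-1)/p}$ are functions of the $p$-harmonic $\mathcal{G}$ alone, say $v=f(\mathcal{G})$. Writing $\nabla v=f'(\mathcal{G})\nabla\mathcal{G}$ and using $\nabla\cdot(|\nabla\mathcal{G}|^{p-2}\nabla\mathcal{G})=0$, a direct computation gives
\begin{equation*}
-\Delta_p v = -\big(|f'|^{p-2}f'\big)'(\mathcal{G})\,|\nabla\mathcal{G}|^p .
\end{equation*}
Inserting the explicit $f$ of case (a), resp.\ (b), and dividing by $v^{p-1}=f(\mathcal{G})^{p-1}$ reproduces exactly the weights \eqref{wgg1gg21m}, resp.\ \eqref{wgg1gg21m2}; in case (a) the factor $|m+M-2\mathcal{G}|^{p-2}$ is $|f'|^{p-2}$ together with the sign change of $f'$ at the midpoint of $[m,M]$, and the hypothesis ``$m=0$ or $p\ge2$'' is the technical condition ensuring $v\in W^{1,p}_{\loc}(\tilde\Omega)$ and that it is a weak solution across the critical level set $\{\mathcal{G}=(m+M)/2\}$. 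Once $v>0$ is a solution it is, in particular, a positive supersolution of $L(\id,-W)w=0$, so the Allegretto--Piepenbrink Theorem~\ref{theorem:AP} with $A=\id$, $V=-W$ yields $Q_{\id,-W}\ge0$ on $C_c^\infty(\tilde\Omega)$, which is the Hardy inequality \eqref{opt_hardym}.

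The main work is criticality of $Q_{\id,-W}$, which by Theorem~\ref{theorem:main} is equivalent to the existence of a null-sequence and then forces $v$ to be the ground state. I would prove it via the ground-state transform: writing $\varphi=v\psi$ with $\psi\in C_c^\infty(\tilde\Omega)$ and using that $v$ solves the equation, $Q_{\id,-W}[v\psi]$ reduces to a nonnegative ``simplified energy'' in $\psi$ that vanishes exactly when $\psi$ is constant. It then suffices to produce a sequence $\psi_n\to1$ whose simplified energy tends to $0$. Since everything is a function of $\mathcal{G}$, I would take $\psi_n$ to be logarithmic cutoffs in the variable $\mathcal{G}$, equal to $1$ on a large intermediate level-set region and tapering to $0$ near the two ends of $\tilde\Omega$ (near $\partial U$, where $\mathcal{G}\to\gamma_1$, and near $\bar\infty$, where $\mathcal{G}\to\gamma_2$). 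The co-area formula together with conservation of the $p$-flux $\int_{\{\mathcal{G}=t\}}|\nabla\mathcal{G}|^{p-1}\dS=c_0$ along the level sets of the $p$-harmonic $\mathcal{G}$ collapses the energy of $\varphi_n=v\psi_n$ to a one-dimensional integral in $t\in(m,M)$, and the logarithmic profile makes this integral tend to $0$, yielding the null-sequence and hence criticality. This reduction to a one-dimensional Hardy problem on $(m,M)$ is the crux of the proof, both because the sign-changing $f'$ in case (a) complicates the simplified-energy estimate and because the cutoff must be tuned to both ends simultaneously.

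Null-criticality and optimality at infinity follow from the same one-dimensional reduction. For null-criticality one computes, again by co-area and flux conservation, $\int_{\tilde\Omega}W v^p\dx = c_0\int_m^M \Phi(t)\dt$ with $c_0>0$ the constant $p$-flux and $\Phi$ an explicit integrand; in case (b) this is a constant multiple of $\int_m^\infty (t-m)^{-1}\dt=\infty$, and analogously in case (a), so $v\notin L^p(\tilde\Omega,W\dx)$. For optimality at infinity one restricts to test functions supported in a neighborhood of $\bar\infty$ and shows, by the same level-set comparison, that the best constant there cannot exceed $1$: the relevant one-dimensional Hardy constant governing the regime $\mathcal{G}\to\gamma_2$ is exactly the sharp $((p-1)/p)^p$ already built into $W$, so near-optimizers obtained by truncating $v$ near infinity drive the ratio $Q_{\id,-W}[\varphi]/\int W|\varphi|^p$ down to $1$, and no improvement is possible.

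Finally, the last assertion is immediate: criticality of $Q_{\id,-W}$ together with Theorem~\ref{theorem:main} gives that $L(\id,-W)w=0$ possesses, up to a multiplicative constant, a unique positive supersolution, namely the ground state; since $v$ has been exhibited as a positive solution and shown to be the ground state, it is that unique supersolution. The principal obstacle throughout is thus the criticality step, i.e.\ constructing and estimating the null-sequence; once the ground-state transform and the co-area formula reduce the problem to the explicit one-dimensional integral on $(m,M)$, null-criticality, optimality at infinity, and uniqueness all follow from variants of the same computation.
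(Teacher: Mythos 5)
The paper does not prove this theorem: it is quoted verbatim from \cite[Theorem~6.1]{dp} as a preliminary, so there is no internal proof to compare against. That said, your outline is essentially the strategy of the proof in the cited source: verify by the chain-rule computation $-\Delta_p f(\mathcal{G})=-\big(|f'|^{p-2}f'\big)'(\mathcal{G})\,|\nabla\mathcal{G}|^p$ that $v$ solves $L(\id,-W)w=0$ (your verification of \eqref{wgg1gg21m2} and \eqref{wgg1gg21m} from this identity is correct), deduce \eqref{opt_hardym} from Theorem~\ref{theorem:AP}, and then obtain criticality, null-criticality and optimality at infinity by reducing everything to a one-dimensional problem in the variable $t=\mathcal{G}$ via the coarea formula and the conservation of the $p$-flux $\int_{\{\mathcal{G}=t\}}|\nabla\mathcal{G}|^{p-1}\dS$ across level sets, with logarithmic cutoffs supplying the null-sequence.

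Two caveats, since you present this as a proof plan rather than a proof. First, for $p\neq 2$ the ``ground-state transform'' step is not an identity: there is no exact Jacobi-type factorization $Q_{\id,-W}[v\psi]=\int v^p|\nabla\psi|^p\dx$, only a two-sided comparison between $Q_{\id,-W}[v\psi]$ and a simplified energy (this is exactly why the paper imposes the stronger hypothesis (H1) when $1<p<2$, and why case (a) carries the extra assumption ``$m=0$ or $p\geq 2$''). Establishing that comparison, and handling the sign change of $f'$ at the level set $\{\mathcal{G}=(m+M)/2\}$ in case (a) --- where $|m+M-2\mathcal{G}|^{p-2}$ degenerates or blows up depending on whether $p>2$ or $p<2$ --- is the bulk of the work in \cite{dp}, and your sketch only names the difficulty. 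Second, optimality at infinity requires a separate argument (test functions concentrated near \emph{one} end, either $\partial U$ or $\bar\infty$, showing that $\lambda>1$ destroys nonnegativity there); ``near-optimizers drive the ratio down to $1$'' is the right idea but is not carried out. Neither point is a wrong turn --- both are resolved in \cite{dp} along the lines you indicate --- but as written they are asserted rather than proved.
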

	
	\subsection{The linear case} \label{sec-lin}
	In the same token, we consider also  {\em linear} (not necessarily symmetric) second-order  elliptic operators $P$ with {\em real} coefficients in divergence form:
	\begin{equation} \label{operator}
	Pu:=-\nabla \!\cdot\!\left[A(x)\nabla u +  u\tilde{b}(x) \right]  +
	b(x)\!\cdot\!\nabla u   +c(x)u \qquad x\in \Gw.
	\end{equation}
	We assume that $A$ satisfied hypothesis (H0),  and $b$ and $\tilde b$ are measurable vector fields in
	$\Gw$ of class $L^q_\loc(\Gw,\R^N)$  and $c$ is a measurable function in $\Gw$ of class $L^{q/2}_\loc(\Gw,\R)$ for some $q> N$.
	By a {\em solution} $v$ of the equation $Pu = 0$ in $\Gw$,  we mean that $v \in W^{1,2}_{\loc}(\Gw)$ and satisfies the equation in the {\em weak} sense. Subsolutions and supersolutions are defined similarly.

	We denote by $P^\star$ the {\em formal adjoint} operator of $P$ on its natural space $L^2(\Gw,\!\dx)$. If $b = \tilde{b}$, then the operator $P$ is 
	{\em symmetric} in the space $L^2(\Gw, \!\dx)$, and we call this setting the {\em linear symmetric case}. 
	We note that if $P$ is symmetric and $b$ is smooth enough, then $P$ is in fact a Schr\"odinger-type operator of the form $Pu =  - \nabla \cdot \big(A \nabla u \big) + \tilde c u$, where $\tilde c=c-\nabla \cdot b$.
	\begin{remark}{\em 
		Our results hold true also when $P$ is of the form 
		\begin{equation} \label{op-P}
		P = -\sum_{i,j=1}^N a_{ij}(x) \, \partial_i \, \partial_j +\sum_{i01}^N b_i(x)\, \partial_i +c(x).
		\end{equation} 
		In this case we should assume that the coefficients $a_{ij}, b_i$ and $c$ are H\"older continuous and that the quadratic form
		$$
		\sum_{i,j=1}^N a_{ij}(x)\, \xi_i\, \xi_j , \qquad \xi\in\R^N
		$$
		is positive definite for all $x\in\Omega$. In this framework we consider classical solutions and supersolutions.
	} 
	\end{remark}
	
\begin{defin}  We say that the operator $P$ is 
\begin{enumerate}
\item {\em nonnegative in} $\Gw$ (and we write $P\geq 0$ in $\Gw$) if the equation $Pu=0$ in $\Gw$ admits a positive (super)solution. 
		
\medskip 
		
\item {\em subcritical} in $\Omega$ if there exists  a function $W\gneqq0$	in $\Gw$ such that $P-W\geq 0$ in $\Gw$. Such a weight $W$ is called a {\em Hardy-weight} for the operator $P$ in $\Gw$. If $P\geq 0$ in $\Gw$,  but $P$ does not admit any Hardy-weight, then $P$ is said to be {\em critical} in $\Gw$. 
		
\end{enumerate}
\end{defin}
For more details concerning {\em criticality theory}, see for example the review article \cite{p07} and references therein. In particular, we need the following result. 
\begin{lemma}\label{lem-crit} The following claims hold true;
\begin{enumerate}
\item The operator $P$ is critical in $\Gw$ if and only if $P^*$ is critical in $\Gw$.
		
\item The operator $P$ is critical in $\Gw$ if and only if the equation $Pu=0$ in $\Gw$ admits (up to a multiplicative constant) a unique  positive supersolution called the Agmon ground state (or in short ground state). 
		
\item The operator $P$ is subcritical in $\Gw$ if and only if $P$ admits a  {\em positive minimal Green function} $G_P^\Gw(x,y)$ in $\Gw$.  
\end{enumerate}
 \end{lemma}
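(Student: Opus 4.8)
The three claims are cornerstones of the criticality theory of linear elliptic operators, and the plan is to organize everything around the minimal positive Green function: I would prove claim~(3) first, deduce claim~(1) from it via adjoint symmetry, and treat claim~(2) separately, since its hard direction is where the real work lies.

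For claim~(3) the plan is a standard exhaustion argument. First I would fix an increasing sequence of smooth bounded subdomains $\Gw_n$ with $\Gw_n \Subset \Gw_{n+1}$ and $\bigcup_n \Gw_n = \Gw$, and consider on each $\Gw_n$ the Dirichlet Green function $G_n(x,y)$ of $P$, which exists and is positive because $P\ge 0$. By the generalized maximum principle the sequence $\{G_n(\cdot,y)\}_n$ is nondecreasing, so it converges pointwise (for $x\neq y$) to a limit $G_P^\Gw(x,y)\in(0,\infty]$. The content is a dichotomy, proved via the Harnack inequality: $G_P^\Gw(x,y)$ is either finite for all $x\neq y$ or identically $+\infty$. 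In the finite case $G_P^\Gw(\cdot,y)$ is a genuine positive Green function, and a perturbation argument---for a suitable small $W\gneqq 0$ the Neumann series $\sum_k (G_P^\Gw \circ W)^k$ applied to a positive function still converges---produces a positive supersolution of $(P-W)u=0$, so that $W$ is a Hardy-weight and $P$ is subcritical. In the infinite case, the normalized functions $G_n(\cdot,y_0)/G_n(x_1,y_0)$ increase to a global positive solution of $Pu=0$ which is a ground state; extracting from it a null-sequence shows that $P$ is critical and hence admits no Hardy-weight. This establishes the equivalence in~(3).

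Claim~(1) then follows from~(3) together with the adjoint symmetry of the Green function. The key identity is $G_P^\Gw(x,y)=G_{P^*}^\Gw(y,x)$, which holds because the Dirichlet inverse $(P|_{\Gw_n})^{-1}$ has integral kernel $G_n(x,y)$ and its $L^2$-adjoint is exactly $(P^*|_{\Gw_n})^{-1}$, with kernel $G_n(y,x)$; this passes to the limit. Hence $G_P^\Gw$ is finite if and only if $G_{P^*}^\Gw$ is finite, i.e.\ $P$ is subcritical if and only if $P^*$ is. Combined with the fact that $P\ge 0 \iff P^*\ge 0$ (equality of the generalized principal eigenvalues on each $\Gw_n$, by the linear analogue of the Allegretto--Piepenbrink theorem), and with criticality being ``nonnegative but not subcritical,'' the criticality of $P$ and of $P^*$ coincide.

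For claim~(2) the converse is the routine direction: if $P$ is subcritical then by~(3) the functions $G_P^\Gw(\cdot,y_1)$ and $G_P^\Gw(\cdot,y_2)$ with $y_1\neq y_2$ are positive supersolutions of $Pu=0$ with singularities at distinct points, hence not proportional, so the positive supersolution is not unique. The substantive direction---and the step I expect to be the main obstacle---is that criticality forces uniqueness. I would let $\phi$ be the ground state furnished by the infinite-Green-function construction in~(3), and let $\phi^*$ be the corresponding ground state of $P^*$; given an arbitrary positive supersolution $u$ of $Pu=0$, the goal is to show $u=c\,\phi$. The plan is first to establish the comparison $u\asymp \phi$ on $\Gw$, using that criticality forbids a Green function and therefore prevents $u$ from carrying ``interior mass'' that would decouple it from $\phi$, and then to upgrade comparability to proportionality by testing the equation against $\phi$ and $\phi^*$ and exploiting that the ground state makes the relevant bilinear form vanish, which squeezes the quotient $u/\phi$ to a constant. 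This uniqueness argument is the delicate heart of the lemma; the remaining implications are comparatively routine consequences of the Green-function machinery.
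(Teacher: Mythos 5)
The paper does not prove this lemma at all: it is stated as a known result quoted from the criticality-theory literature (the review \cite{p07} and references therein), so there is no in-paper proof to compare against; your argument has to stand on its own. Judged that way, your treatment of claims (1) and (3) follows the standard route --- exhaustion by smooth $\Omega_n\Subset\Omega_{n+1}$, monotonicity of the Dirichlet Green functions, the Harnack dichotomy ``finite everywhere or infinite everywhere'', the Neumann-series/perturbation argument for ``finite Green function $\Rightarrow$ subcritical'', and the kernel identity $G_P^\Omega(x,y)=G_{P^*}^\Omega(y,x)$ for claim (1) --- and is essentially correct, modulo routine repairs: the normalized functions $G_n(\cdot,y_0)/G_n(x_1,y_0)$ need not be monotone, so one extracts a locally uniformly convergent subsequence via Harnack and elliptic estimates; and in the ``infinite'' case the cleanest way to rule out a Hardy-weight is to note that $P-W\geq 0$ with $W\gneqq 0$ would make $\tfrac12 W$ a Hardy-weight for $P-\tfrac12W$, hence $G_n^{P}\leq G_n^{P-\frac12 W}\leq G^{P-\frac12 W}<\infty$, contradicting the blow-up.

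The genuine gap is in the hard direction of claim (2). Two points. First, you never address why a positive \emph{supersolution} of a critical operator must in fact be a solution; this is where criticality enters most directly (if $Pu=\nu\gneqq 0$, then $W:=\nu/u$ satisfies $(P-W)u=0$, so $W$ is a Hardy-weight, contradicting criticality). Second, the mechanism you propose for proportionality --- ``testing the equation against $\phi$ and $\phi^*$ and exploiting that the ground state makes the relevant bilinear form vanish'' --- is a quadratic-form argument that is natural in the symmetric case but does not obviously transfer to the nonsymmetric operators \eqref{operator} covered by the lemma, and the integrals $\int_\Omega \phi^*\,Pu\dx$ it would require are not known to converge on an unbounded $\Omega$. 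The standard argument instead uses that the ground state $\phi$ is a positive solution of minimal growth at infinity: any positive solution $u$ satisfies $u\geq c\phi$ for some $c>0$; setting $c_0:=\sup\{c>0: u-c\phi\geq 0\}$, the difference $u-c_0\phi$ is a nonnegative solution which, by Harnack and the minimal growth of $\phi$, must vanish identically, else $c_0$ was not maximal. (Alternatively, pass to the $h$-transform $\phi^{-1}P(\phi\,\cdot)$ and invoke the Liouville property of critical operators with no zeroth-order term.) As written, your step (2) is a plan rather than a proof, and the specific tool you name for its key step would not work in the generality the lemma requires.
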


\begin{definition}\label{groundstate}{\em
		Let $K \Subset \Gw$. We say that a positive solution $u$ of the equation $Pw=0$ in  $\Gw \setminus K$ is a {\em positive solution of the
			operator $P$ of minimal growth in a neighborhood of infinity in $\Gw$}, if for
		any compact set $K \Subset K_{1} \Subset  \Gw$ with a smooth boundary and any positive supersolution $v$ of the equation $Pw=0$
		in $ \Gw \setminus K_{1}$,   $v\in C(( \Gw \setminus K_{1})\cup \partial K_{1})$, the inequality
		$u \leq v$ on $\partial K_{1}$ implies that $u \leq v$ in $ \Gw \setminus K_{1}$.
	}
\end{definition}

\begin{remark}\label{rem-min-gr}{\em Note that 

\begin{enumerate}		
\item If $P$ is subcritical in $\Gw$, then  for any fixed $y\in \Gw$,  the positive minimal Green function $G_P^\Gw(\cdot,y)$ is a positive solution of the
		equation $Pu=0$ in $\Gw\setminus \{y\}$ of minimal growth in a neighborhood of infinity in $\Gw$. 
		
\item On the other hand, in the  critical case, the ground state of $P$ in $\Gw$ is a positive solution of the
equation $Pu=0$ in $\Gw$ of minimal growth in a neighborhood of infinity in $\Gw$.
\end{enumerate}	}
\end{remark}
	\begin{defin}\label{def-opt}
		A Hardy-weight $W$ for a subcritical operator $P$ in $\Gw$ is said to be {\em optimal} if the following three properties hold:
		\begin{itemize}
			\item (Criticality) $ P-W$ is critical in $\Gw$. Denote by $\phi$ and $\phi^\star$ the ground states of $P$ and $P^\star$, respectively.
			\item (Null-criticality) $W\not\in L^1(\Gw,\phi\phi^\star\! \dx )$. 
			\item (Optimality at infinity) For any $\gl>1$ the operator $P-\gl W\not \geq 0$ in any neighborhood of infinity in $\Gw$
		\end{itemize}
	\end{defin}
	The following theorem is a version of \cite[Theorem~4.12]{dfp} (cf.  the discussion therein); we omit its proof since it can be obtained by a slight modification of the proof in  \cite{dfp}.
	\begin{theorem}\label{thm_nsa}
		Let $P$ be a subcritical operator in $\Gw$, and let $0 \lneqq\vgf\in C_c^\infty(\Gw)$. Consider the Green potential 
		$$G_\vgf(x):=\int_\Gw G_P^\Gw(x,y)\vgf(y)\dy,$$ 
		where  $G_P^\Gw$ is the minimal positive Green function. Let $u$ be a positive solution of the equation $Pu=0$ in $\Gw$ satisfying
		\begin{equation}\label{u1u0a}
		\lim_{x \to \bar\infty} \frac{G_\vgf(x)}{u(x)}=0,
		\end{equation}
		where $\bar\infty$ is the ideal point in the one-point compactification of $\Gw$. Consider the positive supersolution
		$$
		v := \sqrt{G_\vgf u}
		$$
		of the operator $P$ in $\Gw$. Then the associated Hardy-weight
		\begin{equation}\label{eq_W}
		W:=\frac{Pv}{v}\, ,
		\end{equation}
		is an optimal Hardy-weight with respect to $P$ in $\Gw$. Moreover,
		$$W=\frac{1}{4}\left|\nabla \log\left(\frac{G}{u}\right)\right|_A^2 \qquad \mbox{in } \Gw\setminus \supp{\vgf}.$$
		%
	\end{theorem}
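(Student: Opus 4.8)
The plan is to realize $W$ via the \emph{supersolution construction} of \cite{dfp}, adapted to the one-point compactification and to the possibly nonsymmetric $P$. The two ingredients are the Green potential $v_0:=G_\vgf$, which solves $Pv_0=\vgf\gneqq0$ (because $G_P^\Gw(\cdot,y)$ is the fundamental solution with pole at $y$), and the positive solution $v_1:=u$, which solves $Pv_1=0$. For $v=\sqrt{v_0v_1}$ the substitution $\log v=\tfrac12(\log v_0+\log v_1)$ gives, by a direct chain-rule computation,
\[
\frac{Pv}{v}=\frac12\left(\frac{Pv_0}{v_0}+\frac{Pv_1}{v_1}\right)+\frac14\left|\nabla\log\frac{v_0}{v_1}\right|_A^2 ;
\]
the essential point is that the first-order drift terms of $P$ enter each $Pv_i/v_i$ linearly through $\nabla\log v_i$ and therefore cancel, so this identity is valid also in the nonsymmetric case. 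Substituting $Pv_0=\vgf$ and $Pv_1=0$ yields $W=\tfrac{\vgf}{2G_\vgf}+\tfrac14\big|\nabla\log(G_\vgf/u)\big|_A^2\geq0$, which is nonzero (since $\vgf\gneqq0$) and reduces to the asserted expression on $\Gw\setminus\supp\vgf$. Since $(P-W)v=Pv-Wv=0$, the function $v$ is a positive solution of $(P-W)w=0$; hence $P-W\geq0$ and $W$ is a Hardy-weight for $P$, which already establishes the ``Moreover'' part.

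To locate the optimal exponent, record the one-parameter family $v_\ga:=G_\vgf^{\,\ga}\,u^{1-\ga}$, $\ga\in(0,1)$, for which the same computation gives, on $\Gw\setminus\supp\vgf$,
\[
\frac{Pv_\ga}{v_\ga}=\ga(1-\ga)\left|\nabla\log\frac{G_\vgf}{u}\right|_A^2=4\ga(1-\ga)\,W .
\]
Thus on $\Gw\setminus\supp\vgf$ each $v_\ga$ is a positive solution of $\big(P-4\ga(1-\ga)W\big)w=0$, so that $P-\gl W\geq0$ near $\bar\infty$ for every $\gl=4\ga(1-\ga)\leq1$, the value $\gl=1$ being attained only at the symmetric exponent $\ga=\tfrac12$, i.e. at our $v=v_{1/2}$. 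For \emph{optimality at infinity} I would fix $\gl>1$ and show $P-\gl W\not\geq0$ in any neighborhood of $\bar\infty$: since $\gl=4\ga(1-\ga)$ has no real solution when $\gl>1$, a positive supersolution of $P-\gl W$ near infinity would, after the ground state transform by $v$, violate the sharpness of the constant $\tfrac14$ in the reduced one-dimensional Hardy inequality.

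For \emph{criticality} and \emph{null-criticality} I would use the ground state transform. In the symmetric case, writing test functions in the form $v\psi$ reduces the quadratic form of $P-W$ to the weighted Dirichlet form $\int_\Gw v^2|\nabla\psi|_A^2\dx$; in general one argues simultaneously with $P$ and $P^\star$ and invokes the equivalence of criticality of $P-W$ and $(P-W)^\star$ from Lemma~\ref{lem-crit}. Testing with $\psi=f_k(s)$, where $s:=\log(u/G_\vgf)$ and $\{f_k\}$ is a family of logarithmic cutoffs, and applying the co-area formula along the level sets of $s$ collapses the energy to a one-dimensional integral $\int f_k'(t)^2\,m(t)\dt$ for a suitable weight $m$. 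Hypothesis \eqref{u1u0a} guarantees that $s\to+\infty$ at $\bar\infty$, so $s$ sweeps a full half-line near infinity and the classical capacity condition $\int^{\infty}\frac{\mathrm{d}t}{m(t)}=\infty$ holds; this produces a null-sequence converging to $v$, whence $P-W$ is critical with ground state $\phi=v$ (and $\phi^\star$ the ground state of $(P-W)^\star$ by Lemma~\ref{lem-crit}). The same reduction shows that $\int_\Gw W\,\phi\phi^\star\dx$ diverges, which is exactly null-criticality.

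The step I expect to be the main obstacle is making this one-dimensional reduction rigorous in the present generality. The co-area argument must be carried out in the $A$-weighted metric and controlled uniformly with respect to the single ideal point $\bar\infty$, which simultaneously encodes the behaviour near $\partial\Gw$ and near spatial infinity; and in the nonsymmetric case the null-sequence must be constructed for the symmetrized problem attached to the pair $(P,P^\star)$ rather than for an honest variational form. The genuinely new point compared with \cite{dfp} is to check that replacing one of the two positive solutions by the Green potential $G_\vgf$---whose minimal growth at $\bar\infty$ is precisely what \eqref{u1u0a} quantifies---leaves all these steps intact.
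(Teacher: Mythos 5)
The paper does not actually prove this theorem: it explicitly omits the proof, stating that it follows by a slight modification of \cite[Theorem~4.12]{dfp}, so your proposal can only be compared with that intended route --- which it does follow. The part you work out is correct, including the adaptation to nonsymmetric $P$: for $P$ in the form \eqref{operator} the drift terms enter $Pv_i/v_i$ affinely through $\nabla \log v_i$, so the convexity identity
\[
\frac{Pv_\ga}{v_\ga}=\ga\,\frac{Pv_0}{v_0}+(1-\ga)\,\frac{Pv_1}{v_1}+\ga(1-\ga)\left|\nabla\log\frac{v_0}{v_1}\right|_A^2
\]
is valid; with $\ga=\tfrac12$, $Pv_0=\vgf$, $Pv_1=0$ it yields $W=\tfrac{\vgf}{2G_\vgf}+\tfrac14\big|\nabla\log(G_\vgf/u)\big|_A^2\gneqq0$, the ``Moreover'' formula, and the fact that $v$ is a positive solution of $(P-W)w=0$, hence that $W$ is a Hardy-weight. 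Up to there your argument is complete.

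The three optimality properties, however, are only announced, and two of them have concrete holes. First, null-criticality in the nonsymmetric setting (Definition~\ref{def-opt}) requires $W\notin L^1(\Gw,\phi\phi^\star\dx)$ with $\phi^\star$ the ground state of $P^\star-W$; you identify $\phi=v=\sqrt{G_\vgf u}$ but never identify $\phi^\star$. Lemma~\ref{lem-crit} only gives the \emph{existence} of $\phi^\star$, not its behaviour near $\bar\infty$, and the divergence of $\int W v^2\dx$ (which your one-dimensional reduction would give) does not imply the divergence of $\int W\phi\phi^\star\dx$ unless you prove two-sided bounds such as $\phi^\star\asymp v$ near infinity --- e.g.\ by showing that $\sqrt{G^\star_{\vgf}\,u^\star}$ is a positive solution of $(P^\star-W)w=0$ of minimal growth for suitable $G^\star,u^\star$ attached to $P^\star$. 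Second, for optimality at infinity the family $v_\ga$ only proves the \emph{easy} direction, namely that $P-\gl W\geq0$ near $\bar\infty$ for all $\gl=4\ga(1-\ga)\leq1$; the absence of a real root of $4\ga(1-\ga)=\gl$ for $\gl>1$ does not by itself exclude a positive supersolution of $P-\gl W$ near infinity, and the appeal to ``the sharpness of $\tfrac14$ in the reduced one-dimensional Hardy inequality'' presupposes exactly the ground-state/co-area reduction that you defer. These reductions (together with the null-sequence construction establishing criticality, where hypothesis \eqref{u1u0a} is used to guarantee that $\log(u/G_\vgf)$ sweeps a half-line near $\bar\infty$) are where essentially all of the work in \cite{dfp} lies, so as it stands the proposal is a correct plan rather than a proof.
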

	
	\medskip
	

\section{\bf Main results} \label{s-main}
\subsection{The quasilinear critical case} \label{s-critical}
We suppose that $L(A,V)$ is nonnegative in $\Gw$, in other words (by Theorem~\ref{theorem:AP}),
\begin{equation} \label{q-form}
Q_{A,V}[\vgf]:=\int_\Gw \big( |\nabla \vgf|_A^p + V |\vgf|^p\big)\dx  \, \geq\,  0 \qquad \forall \vgf\in C_c^\infty(\Gw). 
\end{equation} 
Let $K \Subset \Gw$ be a compact set of positive measure with smooth boundary. Then by  \cite[Proposition 4.18]{pp}, the operator $L(A,V)$ is subcritical in $K^c:=\Gw\setminus K$. Hence,  there exists a Hardy-weight $W\gneqq0$, which depends on $\Gw, K, A$ and $V$, such that a Hardy-type inequality
\begin{equation} \label{ineq-hardy}
Q_{A,V}[\vgf]=  \int_{K^c}\big( |\nabla \vgf|_A^p + V |\vgf|^p\big)\dx   \, \geq \, \int_{K^c} W\, |\vgf|^p\dx \qquad \forall \vgf\in C_c^\infty(K^c)
\end{equation} 
holds true. 


The following theorem answers the question how large (in a neighborhood of infinity in $\Gw$)  the Hardy-weight $W$ in \eqref{ineq-hardy} might be if $L(A,V)$ is critical in
$\Gw$. 
 
\begin{theorem} \label{prop-L1}
Suppose that $L(A,V)$ is critical in $\Gw$ with $A$ and $V$ satisfying either hypothesis {\em(H0)} if $p\geq2$, or {\em (H1)} if $1<p<2$, and let $\phi\in W_{\rm loc}^{1,p}(\Gw)$ be the corresponding ground state satisfying the normalization condition $\|\gf\|_{L^p(K)}=1$. Let $K \Subset \Gw$ be a compact set of positive measure with smooth boundary.

Then  for any $W\gneqq0$ satisfying \eqref{ineq-hardy} and any compact set $\K$ such that $K\Subset \K\Subset \Gw$ we have $W\in L^1( \K^c, \phi^p \dx)$. 
\end{theorem}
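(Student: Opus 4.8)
The plan is to exploit the fact that the ground state $\phi$ is a positive solution of minimal growth at infinity, together with the Hardy inequality \eqref{ineq-hardy}, by testing $Q_{A,V}$ against a cut-off multiple of $\phi$ and controlling the result uniformly. The overarching idea is that integrating $W\phi^p$ against a fixed intermediate cut-off should be comparable to a quantity like $Q_{A,V}[\eta\phi]$ for a suitable Lipschitz cut-off $\eta$, and the latter is finite because it reduces to a boundary-layer contribution supported in an annular region $\K\setminus K$ of finite measure. The key mechanism is that $\phi$ itself, being the ground state, makes $Q_{A,V}$ vanish in a limiting sense, so only the commutator (gradient-of-cut-off) terms survive.

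First I would fix a Lipschitz cut-off function $\eta$ with $\eta\equiv 0$ on a neighborhood of $K$, $\eta\equiv 1$ on $\K^c$, and $0\le\eta\le 1$, supported so that $\supp(\nabla\eta)\subset \K\setminus K$. Since $\phi\in W^{1,p}_{\rm loc}(\Gw)$, the product $\eta\phi$ is a legitimate (compactly supported, after a further truncation at infinity) test object, and by density it may be used in \eqref{ineq-hardy}. This gives
\begin{equation*}
\int_{K^c} W\,(\eta\phi)^p\dx \;\le\; \int_{K^c}\big(|\nabla(\eta\phi)|_A^p+V(\eta\phi)^p\big)\dx \;=\; Q_{A,V}[\eta\phi].
\end{equation*}
Since $\eta=1$ on $\K^c$, the left side dominates $\int_{\K^c}W\phi^p\dx$, so it suffices to bound $Q_{A,V}[\eta\phi]$ from above by a finite constant.

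The second step is to estimate $Q_{A,V}[\eta\phi]$. Here I would use that $\phi$ solves $L(A,V)\phi=0$ weakly, so that testing the equation against $\eta^p\phi$ converts the bulk contribution of $Q_{A,V}[\eta\phi]$ into terms involving only $\nabla\eta$. Concretely, one expands $|\nabla(\eta\phi)|_A^p$ and uses the ground-state equation (an integration-by-parts / Picone-type identity in the quasilinear setting) to cancel the genuinely interior part; the surviving terms are all localized to $\supp(\nabla\eta)\subset\K\setminus K$, where $|\nabla\eta|$ is bounded, $A$ is locally bounded by \eqref{eq-ell}, and $\phi,|\nabla\phi|\in L^p_{\loc}$. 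Because $\overline{\K\setminus K}$ is a compact subset of $\Gw$ of finite measure, all these localized integrals are finite, yielding $Q_{A,V}[\eta\phi]<\infty$ and hence $\int_{\K^c}W\phi^p\dx<\infty$.

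The main obstacle is the quasilinear Picone/ground-state cancellation: for $p\ne 2$ the pointwise expansion of $|\nabla(\eta\phi)|_A^p$ is genuinely nonlinear, and extracting the clean bound requires a careful quasilinear integration-by-parts using that $\phi$ is a weak (super)solution with only $W^{1,p}_{\loc}$ regularity. The cases $1<p<2$ and $p\ge 2$ may need to be handled with slightly different convexity inequalities, which is precisely why hypothesis \emph{(H1)} is imposed in the subquadratic range to guarantee enough local regularity of $\phi$ and $\nabla\phi$ to justify the manipulations. I would expect the bulk of the rigorous work to lie in controlling the cross terms in this expansion on the annulus $\K\setminus K$ rather than in the soft localization argument itself.
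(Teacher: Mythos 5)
There is a genuine gap, and it sits exactly where the criticality hypothesis has to do its work. Your argument tests \eqref{ineq-hardy} with $\eta\phi$ and disposes of the fact that $\eta\phi$ is not compactly supported with the phrase ``by density, after a further truncation at infinity.'' But if you truncate with outer cut-offs $\chi_R$, the Picone/ground-state cancellation leaves you not only with the commutator terms on $\supp(\nabla\eta)\subset\K\setminus K$, but also with terms involving $\nabla\chi_R$ supported in an annulus near infinity (schematically, for $p=2$, $Q_{A,V}[\eta\chi_R\phi]=\int\phi^2|\nabla(\eta\chi_R)|_A^2\dx$). Nothing in your argument shows that the contribution $\int\phi^p|\nabla\chi_R|_A^p\dx$ can be made to vanish as $R\to\infty$; the existence of outer cut-offs with this property is \emph{precisely} the null-sequence characterization of criticality, i.e.\ the ground-state property of $\phi$. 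As written, your proof uses only that $\phi$ is a positive solution of $L(A,V)u=0$ near infinity, and the conclusion is false at that level of generality: for $-\Delta$ on $\R^2$ with the Leray weight $W\asymp |x|^{-2}\log^{-2}|x|$ one has $W\in L^1(\K^c,\dx)$ for the ground state $\phi=1$, but $W\notin L^1(\K^c,\phi_1^2\dx)$ for the non-minimal positive harmonic function $\phi_1=\log|x|$ (cf.\ Example~3 and Remark~\ref{rem-w}). So the ``by density'' step is not a technicality you can postpone; it is the heart of the matter, and it cannot be justified without invoking the null-sequence.

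The paper's proof runs the same localization in the opposite direction and thereby avoids the problem: it takes the null-sequence $\{\vgf_n\}$ guaranteed by Theorem~\ref{theorem:main} (each $\vgf_n$ already compactly supported, with $Q_{A,V}[\vgf_n]\to0$), multiplies by the \emph{inner} cut-off $f$ vanishing on $K$, and inserts $f\vgf_n$ into \eqref{ineq-hardy}. The upper bound for $Q_{A,V}$ applied to $f\vgf_n$ requires no Picone identity at all --- only the elementary inequality $(a+b)^p\le 2^{p-1}(a^p+b^p)$ --- and reduces to $Q_{A,V}[\vgf_n]$ (which tends to $0$) plus integrals over $\K$ that are uniformly bounded because $\vgf_n=\psi_n\phi$ is bounded in $W^{1,p}_{\rm loc}$ by \cite[Propositions 4.11--4.12]{pp}, with the $V$-term handled by Rellich--Kondrachov and H\"older. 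One then passes to the limit on the left-hand side by Fatou, using $\vgf_n\to\phi$ a.e. If you want to salvage your route, you must replace the outer truncation $\chi_R$ by the null-sequence factors $\psi_n$ and prove that their contribution at infinity vanishes --- at which point you have essentially reconstructed the paper's argument, and the quasilinear Picone machinery you anticipated turns out to be unnecessary.
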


\begin{proof}
Let $K\Subset \K\Subset \Gw$. In view of our ellipticity assumption \eqref{eq-ell} it is possible to choose $f\in C^1(\Gw)$ satisfying 
$$
f =  \left\{
\begin{array}{l@{\quad}l}
0 & \quad \text{in} \ \  K , \\
1 & \quad \text{in} \ \ \K^c,
\end{array}
\right.  \qquad 0 \leq f \leq 1, 
$$
and such that $|\nabla f(x) |_{A} \leq C_0$ holds for all $x\in \Gw$. Since $L(A;V)$ is critical in $\Gw$, there exists a null-sequence $\{\vgf_n\}_{n\in\N} \subset C_c^\infty(\Gw)$ such that $\vgf_n \geq 0$, $\|\vgf_n\|_{L^p(K)}=1$   for all $n\in\N$, and 
\begin{equation} \label{null-s}
 \int_\Gw \big( |\nabla \vgf_n|_A^p + V |\vgf_n|^p\big)\dx  \ \to 0 \qquad \text{as} \  \ n\to\infty. 
\end{equation}
Moreover, by density and inequality \eqref{ineq-hardy} we have
\begin{equation}  \label{hardy-2}
  \int_{K^c}\big( |\nabla (f \vgf_n)|_A^p + V |f \vgf_n|^p\big)\dx   \, \geq \, \int_{K^c} W\, |f \vgf_n|^p\dx \, \geq \, \int_{\K^c} W\, | \vgf_n|^p\dx  .
\end{equation} 
Since $(a+b)^p \leq  2^{p-1}a^p +2^{p-1} b^p$ holds for all $a,b>0$ and $p\geq 1$, we obtain the upper bound 
\begin{align*}
& \int_{K^c}\!\! \big(|\nabla (f \vgf_n)|_A^p + V |f \vgf_n|^p\big)\!\dx  = \int_{\K^c}\!\! \big( |\nabla \vgf_n|_A^p + V |\vgf_n|^p\big)\!\dx + \int_{\K\setminus K} \!\!\big(|\nabla (f \vgf_n)|_A^p + V |f \vgf_n|^p\big)\!\dx \\
& \  =\int_{\Gw}\!\! \big( |\nabla \vgf_n|_A^p + V |\vgf_n|^p\big)\dx - \int_{\K}\!\! \big( |\nabla \vgf_n|_A^p + V |\vgf_n|^p\big)\dx +\int_{\K\setminus K} \!\!\big(|\nabla (f \vgf_n)|_A^p + V |f \vgf_n|^p\big)\dx \\
& \ \leq \int_{\Gw}\!\! \big( |\nabla \vgf_n|_A^p + V |\vgf_n|^p\big)\dx +2^{p-1}\!\! \int_\K|\nabla \vgf_n|_A^p \, \dx + 2 \int_\K |V|\, | \vgf_n|^p \, \dx 
+ 2^{p-1} C_0^p \int_\K  | \vgf_n| ^p \, \dx .
\end{align*}
Hence, there exists $C_p>0$ such that 
\begin{equation}  \label{hardy-3}
\int_{\K^c} W\, |\vgf_n|^p\dx  \leq  \, \int_\Gw \big( |\nabla \vgf_n|_A^p + V |\vgf_n|^p\big)\dx + C_p \int_{\K} \big( |\nabla  \vgf_n|_A^p + |V| | \vgf_n|^p+| \vgf_n|^p\big) \dx .
\end{equation} 
On the other hand, by \cite[Theorem~4.12]{pp} it then follows that the sequence $\{\vgf_n\}$ converges in $L^p_{\rm loc}$ and almost everywhere in $\Gw$ to $\phi$ and that 
\begin{equation} \label{phi-reg}
\phi, |\nabla \phi| \in L^\infty_{\rm loc}(\Gw) \qquad \text{if} \ \ \ 1<p<2. 
\end{equation}
Moreover, if we write 
$$
\vgf_n = \psi_n\, \phi,
$$
then the sequence $\psi_n$ is bounded in $W_{\rm loc}^{1,p}(\Gw)$ and $\nabla \psi_n \to 0$ in $L^p_{\rm loc}(\Gw)$, see \cite[Proposition~4.11]{pp}.
In view of the Rellich-Kondrachov theorem,  H\"older inequality and \eqref{phi-reg} it thus follows that $\vgf_n$ is bounded in $W_{\rm loc}^{1,p}(\Gw)$. Therefore, 
\begin{equation} \label{un}
\limsup_{n\to \infty} \int_{\K} \big( |\nabla  \vgf_n|_A^p + |\vgf_n|^p\big)\dx \, < \, \infty. 
\end{equation}
This in turn implies, again by using the Rellich-Kondrachov theorem, that $\vgf_n$ is bounded in $L^r_{\rm loc}(\Gw)$ for all $r\in[p,\infty]$ if $p>N$, all $r\in[p,\infty)$ if $p=N$ and all $r\in[p,p^*]$ if $p<N$, where $ p^*=\frac{pN}{N-p}$. By assumption, we have $p q' < p^*$. Then by the  H\"older inequality  
\begin{equation} \label{Vun}
\limsup_{n\to \infty} \int_{\K} \big( |V|\, |\vgf_n|^p\big)\dx \ \leq \ \limsup_{n\to \infty}\Big( \int_{\K}  |V|^q \dx \Big)^{\frac 1q}  \Big( \int_{\K} |\vgf_n|^{pq'} \dx \Big)^{\frac{1}{q'}}  \ < \infty .
\end{equation}

\medskip

To complete the proof we note that in view of the pointwise a.e.~convergence of $\vgf_n$ to $\phi$ and the Fatou lemma it holds
$$
\liminf_{n\to\infty} \int_{\K^c} W |\vgf_n|^p\dx \ \geq \  \int_{\K^c} W  \liminf_{n\to\infty} |\vgf_n|^p\dx = \int_{\K^c} W |\phi|^p\dx\ . 
$$
This in combination with \eqref{null-s}, \eqref{hardy-3}, \eqref{un}  and  \eqref{Vun} gives
$$
\int_{\K^c} W |\phi|^p\dx \ < \infty,
$$
and the claim follows. 
\end{proof}

\subsection{The quasilinear subcritical case} \label{s-subcritical}
We have
\begin{theorem} \label{prop-L2}
	Suppose that $L(A,V)$ of the form \eqref{L-op} is subcritical in $\Gw$ with $A$ and $V$ satisfying either hypothesis {\em(H0)} if $p\geq2$, or {\em (H1)} if $1<p<2$. Let $K \Subset \Gw$ be a compact set of positive measure with smooth boundary,  and let $\phi\in W_{\rm loc}^{1,p}(\Gw\setminus K)$ be a positive solution of the equation $L(A,V)[u]=0$ in  $\Gw\setminus K$ of minimal growth in a neighborhood of infinity in $\Gw$.  
	
	Then  for any Hardy-weight $W\gneqq0$ for $L(A,V)$ in $\Gw$, and any compact set $\K$ such that $K\Subset \K\Subset \Gw$, we have $W\in L^1( \K^c, \phi^p \dx)$. 
\end{theorem}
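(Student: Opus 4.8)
The plan is to run the same cutoff-and-Fatou scheme as in the proof of Theorem~\ref{prop-L1}, but since a null-sequence is no longer available in the subcritical case, I would replace it by a monotone sequence of approximating solutions manufactured from the minimal-growth hypothesis, whose energies in a neighborhood of infinity remain uniformly bounded. Fix sets $K\Subset K_1\Subset\K\Subset\Gw$ with $K_1$ having smooth boundary, and an exhaustion $\{\Gw_n\}$ of $\Gw$ with $\K\Subset\Gw_1$. For each $n$ let $\phi_n$ be the weak solution of $L(A,V)\phi_n=0$ in $\Gw_n\setminus K_1$ with $\phi_n=\phi$ on $\partial K_1$ and $\phi_n=0$ on $\partial\Gw_n$, extended by $0$ to $\Gw\setminus\Gw_n$. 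By the weak comparison principle for the nonnegative operator $L(A,V)$ one has $0\le\phi_n\le\phi$ and $\phi_n\le\phi_{n+1}$, so $\phi_n\uparrow\psi$ for some positive solution $\psi\le\phi$ in $\Gw\setminus K_1$ with $\psi=\phi$ on $\partial K_1$. Since $\phi$ is of minimal growth in a neighborhood of infinity and $\psi$ is a positive supersolution in $\Gw\setminus K_1$ with $\psi\ge\phi$ on $\partial K_1$, the defining property of minimal growth forces $\psi\ge\phi$; hence $\psi=\phi$ and $\phi_n\uparrow\phi$ pointwise in $\Gw\setminus K_1$.

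Next I would establish the uniform energy bound. Fix $\eta\in C_c^\infty(\Gw)$ with $\eta\equiv1$ near $\partial K_1$ and $\supp\eta\Subset\Gw_{n_0}$. For $n\ge n_0$ the function $v_n:=\phi_n-\eta\phi$ belongs to $W^{1,p}_0(\Gw_n\setminus K_1)$, so testing the equation for $\phi_n$ against $v_n$ gives
\[
\int_{\Gw_n\setminus K_1}\!\big(|\nabla\phi_n|_A^p+V|\phi_n|^p\big)\dx
=\int_{\Gw_n\setminus K_1}\!\Big(|\nabla\phi_n|_A^{p-2}A\nabla\phi_n\cdot\nabla(\eta\phi)+V|\phi_n|^{p-2}\phi_n\,\eta\phi\Big)\dx .
\]
The right-hand side is supported in the fixed compact set $\supp(\eta\phi)\subset\Gw\setminus K_1$, on which the interior and boundary regularity estimates for $L(A,V)$ (together with $0\le\phi_n\le\phi$ and the fixed smooth data on $\partial K_1$) bound $\phi_n$ uniformly in $W^{1,p}\cap L^\infty$; combined with $V\in M^q_{\rm loc}(p;\Gw)$ and Hölder's inequality exactly as in \eqref{Vun}, the right-hand side is bounded by a constant independent of $n$. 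Thus $\sup_n\int_{\Gw_n\setminus K_1}(|\nabla\phi_n|_A^p+V|\phi_n|^p)\dx<\infty$, and subtracting the contribution of the fixed compact set $\K\setminus K_1$ (again uniformly controlled by the local estimates) shows that $\int_{\K^c}(|\nabla\phi_n|_A^p+V\phi_n^p)\dx$ is bounded uniformly in $n$.

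Now I would introduce the cutoff near $K$ and apply the Hardy inequality. Choose $f\in C^1(\Gw)$ with $f=0$ on $K_1$, $f=1$ on $\K^c$, $0\le f\le1$, $|\nabla f|_A\le C_0$ and $\supp\nabla f\subset\K\setminus K_1$, as in Theorem~\ref{prop-L1}. Then $\vgf_n:=f\phi_n\in W^{1,p}(\Gw)$ is nonnegative with compact support in $\Gw\setminus K$, so by density the subcriticality inequality \eqref{ineq:subcriticality} applies, and since $f\equiv1$ on $\K^c$,
\[
\int_{\K^c} W\,\phi_n^p\dx\ \le\ \int_\Gw W\,|\vgf_n|^p\dx\ \le\ Q_{A,V}[f\phi_n].
\]
Splitting $Q_{A,V}[f\phi_n]$ into its contribution on $\K^c$ (which equals $\int_{\K^c}(|\nabla\phi_n|_A^p+V\phi_n^p)\dx$, bounded by the previous step) and on the transition region $\K\setminus K_1$ (bounded by the uniform local bounds on $\phi_n,\nabla\phi_n$ and on $V$), one obtains $\sup_n Q_{A,V}[f\phi_n]<\infty$. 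Because $\phi_n\uparrow\phi$ pointwise on $\K^c$ and $W\ge0$, the monotone convergence theorem yields $\int_{\K^c}W\phi^p\dx=\lim_n\int_{\K^c}W\phi_n^p\dx<\infty$, i.e.\ $W\in L^1(\K^c,\phi^p\dx)$.

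The main obstacle I anticipate is the first step: turning the purely qualitative minimal-growth hypothesis into the quantitative monotone approximation $\phi_n\uparrow\phi$ carrying uniformly bounded exterior energy. This rests on the weak comparison principle for the nonnegative operator $L(A,V)$ and on uniform elliptic estimates up to the fixed smooth boundary $\partial K_1$, and it is precisely here that the definition of a positive solution of minimal growth in a neighborhood of infinity enters in an essential way. Once this monotone family is in place, the remaining cutoff-and-Fatou argument is strictly parallel to the critical case, with the sequence $\{\phi_n\}$ playing the role that the null-sequence $\{\vgf_n\}$ played in Theorem~\ref{prop-L1}.
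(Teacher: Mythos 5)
Your argument is essentially correct, but it follows a genuinely different route from the paper's proof. The paper reduces the subcritical case to the critical one by a compactly supported perturbation of the potential: it chooses $0\le V_c\in C_c^\infty(K)$ so that $L(A,V-V_c)$ is critical in $\Gw$ with ground state $\psi$ (\cite[Proposition~4.19]{pp}), observes that $Q_{A,V-V_c}=Q_{A,V}$ on $C_c^\infty(K^c)$ since $\supp V_c\subset K$, applies Theorem~\ref{prop-L1} to the critical operator to get $W\in L^1(\K^c,\psi^p\dx)$, and concludes via $\psi\asymp\phi$ in $\K^c$ because both are positive solutions of minimal growth (\cite[Theorem~5.9]{pp}). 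You instead manufacture a substitute for the null-sequence directly, by the exhaustion procedure $\phi_n\uparrow\phi$ with a uniform exterior energy bound obtained from testing against $\phi_n-\eta\phi$, and then run the cutoff-and-monotone-convergence scheme. Your route is self-contained at the level of ideas but re-derives a substantial amount of machinery that the paper simply cites: the solvability and positivity of the Dirichlet problems in the annuli $\Gw_n\setminus K_1$, the weak comparison principle for $L(A,V)$ with a sign-changing Morrey potential (nontrivial for $p\neq2$; this is \cite[Theorem~5.3]{pp}), and uniform up-to-the-boundary Caccioppoli estimates at $\partial K_1$ --- precisely the ingredients of the proof of \cite[Theorem~5.9]{pp}, which also yields the identification $\psi=\phi$ you establish by hand. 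The paper's reduction buys brevity and reuses Theorem~\ref{prop-L1} as a black box; your argument buys an explicit quantitative bound $\int_{\K^c}W\phi^p\dx\le\sup_nQ_{A,V}[f\phi_n]$ and avoids invoking the existence of the critical perturbation $V_c$. If you keep your version, you should cite the comparison principle and the exhaustion characterization of minimal growth solutions from \cite{pp} rather than treating them as routine, since they are exactly the point where the minimal-growth hypothesis is consumed.
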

\begin{proof}
	Suppose that $L(A,V)$ is subcritical in $\Gw$, and let $W$ be a Hardy-weight for $L(A,V)$ in $\Gw$. So, 
	\begin{equation}\label{ineq:subcriticality_1}
	Q_{A,V}[\vgf]
	\geq
	\int_\Gw W|\vgf|^p\dx   \qquad \mbox{for all } \vgf\in C_c^\infty(\Gw).
	\end{equation}
	
	Let $K \Subset \Gw$ be a compact set of positive measure with smooth boundary,  and let $\phi\in W_{\rm loc}^{1,p}(\Gw\setminus K)$ be a positive solution of the equation $L(A,V)[u]=0$ in  $\Gw\setminus K$ of minimal growth in a neighborhood of infinity in $\Gw$.

	Let $V_c\in C_c^\infty(K)$ be a nonnegative potential such that $L(A,V-V_c)$ is critical in $\Gw$ with a ground state $\psi$ \cite[Proposition~4.19]{pp}. Since both $\psi$ and $\phi$ are 
	positive solutions of the equation $L(A,V)[u]=0$ in  $\Gw\setminus K$ of minimal growth in a neighborhood of infinity in $\Gw$ \cite[Theorem~5.9]{pp}, it follows that $\psi\asymp \phi$ in $\K^c$.  
	On the other hand, in light of \eqref{ineq:subcriticality_1}, we have 
	\begin{equation} \label{ineq-hardy2}
	Q_{A,V-V_c}[\vgf] =Q_{A,V}[\vgf]=  \int_{K^c}\big( |\nabla \vgf|_A^p + V |\vgf|^p\big)\dx    \geq  \int_{K^c} W\, |\vgf|^p\dx \qquad \forall \vgf\in C_c^\infty(K^c)
	\end{equation} 
	Consequently, Theorem~\ref{prop-L1} implies that $W\in L^1( \K^c, \psi^p \dx)=L^1( \K^c, \phi^p \dx)$. 
\end{proof}


\begin{remark} \label{rem-w}{\em 
The conditions on the decay of $W$ at infinity given by Theorem \ref{prop-L2} could be compared with the behaviour at infinity of the optimal Hardy-weight given by
Theorem \ref{thm_opt_hardym}. For example, let $V=0$ and $A = \id$ so that $L=-\Delta_p$ and assume that
 $L$ is subcritical in $\Omega$. Let $W$ be an optimal Hardy-weight for $L$, and let $v$ be the ground state of the critical operator $L(\id, -W)$. 
 Then by the null-criticality with respect to $W$
\begin{equation} \label{eq-dp}
\int_{\Omega\setminus K} W(x)\, v^p(x)\, \dx = \infty
\end{equation} 
holds for all $K \Subset \Omega$. This is of course not in contradiction with Theorem  \ref{prop-L2} because $v$ is larger than the function $\phi$ considered in Theorem ~\ref{prop-L2}. For example,  if $v$ is given by the supersolution construction as in Theorem~\ref{thm_opt_hardym} (or \cite[Theorem~1.5]{dfp}), then $v=\gf^{(p-1)/p}$ and by \eqref{eq-dp} we have $W\not \in L^1( \K^c, \phi^{p-1} \dx)$, while by Theorem~\ref{prop-L2}, $W\in L^1( \K^c, \phi^p \dx)$.}
\end{remark}

\medskip

\noindent As a straightforward consequence of Theorem~\ref{prop-L1}, we have:
\begin{corollary}\label{Idan1}
Suppose that $L(A,V)$ of the form \eqref{L-op} is subcritical in $\Gw$ with $A$ and $V$ satisfying either hypothesis {\em(H0)} if $p\geq2$, or {\em (H1)} if $1<p<2$. Let $W\gneqq0$ be a Hardy-weight for $L(A,V)$ such that \eqref{ineq:subcriticality_1} holds true. If $W$ is null-critical, then $W$ is also optimal at infinity in the sense of Definition \ref{def_opt}.
\end{corollary}
\begin{proof}
Let $\phi \in W^{1,p}_{\rm loc}(\Omega)$ be the ground state of the critical operator $L(A,V-W)$ in $\Omega$. Assume that $W$ is not optimal at infinity. 
Then there exists a neighborhood of infinity in $\Omega$, which we denote by $\Omega_\infty$, and a constant $\lambda>1$ such that 
\begin{equation}
Q_{A,V}[\vgf] \ \geq  \ \lambda \int_\Gw W\, |\vgf|^p\dx   \qquad\forall\, \vgf\in C_c^\infty(\Omega_\infty).
\end{equation}
Let $K\Subset\Omega$ be a compact set such that $K^c \subset \Omega_\infty$. Then 
$$
Q_{A,V-W}[\vgf] \ \geq  \  (\lambda-1) \int_\Gw W\, |\vgf|^p\dx   \qquad\forall\, \vgf\in C_c^\infty(K^c).
$$
Hence by Theorem~\ref{prop-L1} we have $W\in L^1( \K^c, \phi^p \dx)$ for any compact set $\K\Subset\Omega$ with $K\Subset \K$. Since $W\in L^1( \K, \phi^p \dx)$ by density and inequality \eqref{ineq:subcriticality_1}, it follows that $W\in L^1(\Omega, \phi^p \dx)$ which is in contradiction with the null-criticality of $W$.
\end{proof}

\subsection{The linear critical case}\label{sec-cr-lin}
We have
\begin{theorem} \label{thm-nonsym}
Let $P$ be an elliptic operator of the form \eqref{operator} (or  \eqref{op-P}), and assume that $P$ is critical in $\Omega$. Let $\phi$ and $\phi^*$ be the ground states, in $\Omega$,  of $P$ and $P^*$, respectively. 
 Let $K \Subset \Gw$ be a compact set of positive measure with smooth boundary.

Let $W\gneqq0$  be a Hardy-weight for $P$ in $K^c$. Then for any $\K$ such that $K\Subset \K\Subset \Gw$ we have $W\in L^1( \K^c, \phi\,\phi^* \dx)$. 
\end{theorem}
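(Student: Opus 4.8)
The plan is to reduce the nonsymmetric critical operator $P$ to the symmetric critical case already settled by Theorem~\ref{prop-L1} (with $p=2$), by means of a ground-state transform that manufactures the weight $\phi\,\phi^*$. Two structural facts come first. Since $P$ is critical in $\Gw$, so is $P^*$ (Lemma~\ref{lem-crit}(1)), and $P\phi=0$, $P^*\phi^*=0$ in $\Gw$. Moreover $(P-W)^*=P^*-W$, and nonnegativity is preserved under passage to the adjoint, so the hypothesis that $W$ is a Hardy-weight for $P$ in $K^c$ (i.e. $P-W\ge 0$ in $K^c$) automatically makes $W$ a Hardy-weight for $P^*$ in $K^c$ as well. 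Thus I may work with positive supersolutions of both $P-W$ and $P^*-W$ in $K^c$.

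Next I would carry out the substitution $u=\phi v$ together with the change of measure $\mathrm{d}\mu:=\phi\,\phi^*\dx$, and set $\mathcal{L}v:=\phi^{-1}P(\phi v)$. Using $P\phi=0$ the operator $\mathcal{L}$ has no zeroth-order term, and a direct computation (using $P\phi=0$ and $P^*\phi^*=0$ simultaneously) shows that the vector field $Z:=\phi A\nabla\phi^*-\phi^* A\nabla\phi-\phi\phi^*(\tilde b-b)$ is divergence-free. Hence the first-order part of $\mathcal{L}$ is skew-symmetric in $L^2(\Gw,\mu)$, contributes nothing to the real quadratic form, and one obtains the identity $\langle\mathcal{L}v,v\rangle_{\mu}=\int_{\Gw}|\nabla v|_A^2\,\mathrm{d}\mu=\int_{\Gw}|\nabla v|^2_{\phi\phi^* A}\dx$. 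In other words the symmetric part of the transformed problem is exactly the functional $Q_{\phi\phi^*A,\,0}$ of the type handled in Theorem~\ref{prop-L1}, the ground state $\phi$ of $P$ corresponds to the constant $v\equiv 1$, and the target conclusion $W\in L^1(\K^c,\phi\phi^*\dx)$ becomes precisely $W\phi\phi^*\in L^1(\K^c,\dx)$.

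Before invoking Theorem~\ref{prop-L1} I must verify two things about $Q_{\phi\phi^*A,0}$: that it is critical in $\Gw$ (so a null-sequence converging to $1$ exists) and that $W\phi\phi^*$ is an admissible Hardy-weight for it in $K^c$. The first I would get from the monotonicity of the generalized principal eigenvalue under a divergence-free drift: adding the skew term cannot decrease $\lambda_0$, so if $Q_{\phi\phi^*A,0}$ were subcritical in $\Gw$ it would carry a Hardy-weight $w_0\gneqq 0$, whence $\lambda_0(\mathcal{L}-w_0;\Gw)\ge \lambda_0(Q_{\phi\phi^*A,0}-w_0;\Gw)\ge 0$; this would make $w_0$ a Hardy-weight for $\mathcal{L}$, i.e. for $P$, contradicting criticality of $P$ in $\Gw$.

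The main obstacle is the second point: transferring the Hardy inequality past the skew drift. The drift is invisible to the value of the quadratic form but not to the passage from ``positive supersolution of $\mathcal{L}-W$'' to the quadratic-form inequality $\int|\nabla v|^2_{\phi\phi^*A}\dx\ge\int W v^2\,\mathrm{d}\mu$; indeed the naive ground-state substitution recovers the symmetric Hardy inequality only up to a deficit $\tfrac14|\beta|_A^2$, where $\beta$ is the skew drift (a Riccati identity for the geometric mean $\omega:=\sqrt{\tilde v\,\tilde v^*}$ of the supersolutions of $\mathcal{L}-W$ and its $\mu$-adjoint makes this deficit explicit). This is precisely where the symmetry between $P$ and $P^*$ and the criticality theory of \cite{p07,dfp} must be used to absorb the skew defect and certify $W\phi\phi^*$ as a genuine Hardy-weight for $Q_{\phi\phi^*A,0}$ in $K^c$; I expect this to be the technical heart of the argument. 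Once it is in place, Theorem~\ref{prop-L1} applied to $Q_{\phi\phi^*A,0}$ (with $p=2$ and normalized ground state $1$) yields $\int_{\K^c}W\phi\phi^*\dx<\infty$, which is the assertion.
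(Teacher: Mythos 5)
Your reduction stalls exactly at the step you yourself flag as ``the technical heart'', and unfortunately that step is not merely unproven --- the implication it rests on is false in general. For a nonsymmetric operator, ``$W$ is a Hardy-weight for $P$ in $K^c$'' means only that $P-W$ admits a positive supersolution there; it does \emph{not} yield the quadratic-form inequality $\int_{K^c}|\nabla v|^2_{\phi\phi^*A}\dx\geq\int_{K^c}W v^2\,\phi\phi^*\dx$ for the symmetric part after the ground-state transform. The monotonicity of $\lambda_0$ under a skew (measure--divergence-free) drift goes in the direction you use for criticality of $Q_{\phi\phi^*A,0}$, but in the \emph{opposite} direction for transferring the Hardy inequality: the generalized principal eigenvalue of the nonsymmetric operator dominates that of its symmetric part, so nonnegativity of $\mathcal{L}-W$ gives no control on the symmetric form minus $W$. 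The deficit $\tfrac14|\beta|^2_A$ you mention is irremovable: e.g.\ $-\Delta+b\cdot\nabla-\tfrac{|b|^2}{4}$ with constant $b$ admits the positive solution $e^{b\cdot x/2}$ (and its adjoint admits $e^{-b\cdot x/2}$), yet $\int|\nabla v|^2\dx\geq\tfrac{|b|^2}{4}\int v^2\dx$ fails on $\R^N$; so even having positive supersolutions of both $\mathcal{L}-W$ and its $\mu$-adjoint does not certify $W\phi\phi^*$ as a Hardy-weight for the Dirichlet form. Since your route would in fact prove a statement strictly stronger than the theorem (a form inequality rather than an integrability condition), and no mechanism is offered to absorb the skew defect, the proof is incomplete at its decisive point.

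The paper avoids symmetrization altogether. Since $P$ is critical in $\Gw$, it is subcritical in $K^c$ and possesses a minimal positive Green function $G_P^{K^c}$; the perturbation-theoretic result \cite[Lemma~3.1]{p99} gives $\int_{K^c}G_P^{K^c}(x,z)W(z)G_P^{K^c}(z,y)\dz<\infty$ for any Hardy-weight $W$ in $K^c$. Near infinity, $G_P^{K^c}(\cdot,y)$ and $G_P^{K^c}(x,\cdot)$ are positive solutions of $Pu=0$ and $P^*u=0$ of minimal growth, hence comparable to the ground states $\phi$ and $\phi^*$ respectively on $\K^c$, and the finiteness of $\int_{\K^c}W\phi\,\phi^*\dx$ follows at once. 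If you want to salvage your approach, you would need to replace the form-inequality step by an argument at the level of supersolutions and Green functions --- which is essentially what the paper's proof already is.
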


\begin{proof}
The operator $P$ is subcritical in $K^c$. 
Let $G_P^{K^c}(x,y)$ be the minimal positive Green function of $P$ in $K^c$, and let $W\gneqq0$ be a Hardy-weight for $P$ in $K^c$. Then by \cite[Lemma 3.1]{p99} 
\begin{equation} \label{eq-p99}
\int_{K^c} G_P^{K^c}(x,z)\, W(z)\, G_P^{K^c}(z,y)\, \dz\, < \, \infty \qquad \forall\, x,y \in K^c. 
\end{equation}
On the other hand, by Remark~\ref{rem-min-gr}, for any compact $\K$ such that $K\Subset \K\Subset \Gw$, we have for fixed  $x,y \in K^c\setminus \K^c$  
$$
G_P^{K^c}(x,\cdot) \asymp \phi^* \,  ,\qquad   G_P^{K^c}(\cdot,y) \asymp \phi  \qquad \mbox{in } \K^c.
$$
Hence, it follows from \eqref{eq-p99} that for fixed  $x,y \in K^c\setminus \K^c$ it holds
\begin{equation*}
\int_{\K^c} W(z)\, \phi(z)\, \phi^*(z) \dz  \leq  C \int_{\K^c} G_P^{K^c}(x,z)\, W(z)\, G_P^{K^c}(z,y) \dz < \infty. \qedhere
\end{equation*}
\end{proof}

\medskip

\subsection{The linear subcritical case}\label{sec-scr-lin}
We have
\begin{theorem} \label{prop-P2}
	Consider a linear operator $P$ of the form \eqref{operator} (or  \eqref{op-P}),  satisfying the corresponding local regularity assumption mentioned in Subsection~\ref{sec-lin}. 
	Assume that $P$ is subcritical in $\Gw$. 
	Let $K \Subset \Gw$ be a compact set of positive measure with smooth boundary,  and let $\phi, \phi^*\in W_{\rm loc}^{1,2}(\Gw\setminus K)$ be positive solutions of the equation $Pu=0$ and respectively, $P^*u^*=0$ in $\Gw\setminus K$ of minimal growth in a neighborhood of infinity in $\Gw$.  
	
	Then  for any Hardy-weight $W\gneqq0$ for $P$ in $\Gw$, and any compact set $\K$ such that $K\Subset \K\Subset \Gw$, we have $W\in L^1( \K^c, \phi \phi^* \dx)$. 
\end{theorem}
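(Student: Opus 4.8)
The plan is to reduce the subcritical case to the critical case already settled in Theorem~\ref{thm-nonsym}, exactly as Theorem~\ref{prop-L2} was reduced to Theorem~\ref{prop-L1} in the quasilinear setting. First I would use criticality theory (see e.g. \cite{p07}) to produce a compactly supported critical perturbation of $P$: since $P$ is subcritical in $\Gw$ and $K$ is compact with smooth boundary and positive measure (hence with nonempty interior), there is a nonzero nonnegative $V_0\in C_c^\infty(\mathrm{int}\,K)$ and a finite threshold $t_0>0$ such that $P_0:=P-t_0V_0$ is critical in $\Gw$. I would denote by $\phi_0$ and $\phi_0^*$ the ground states of $P_0$ and $P_0^*$, respectively.

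Next I would exploit that $\supp(t_0V_0)\subseteq K$, so that $P_0=P$ and $P_0^*=P^*$ in $K^c$. This has two consequences. On the one hand, the Hardy inequality $P-W\ge 0$ in $\Gw$ restricts to $P_0-W=P-W\ge 0$ in $K^c$, so $W$ remains a Hardy-weight for $P_0$ in $K^c$. On the other hand, by Remark~\ref{rem-min-gr}(2) the ground states $\phi_0,\phi_0^*$ are positive solutions of minimal growth in a neighborhood of infinity in $\Gw$ for $P_0,P_0^*$, hence also for $P,P^*$ in $K^c$; by the uniqueness up to equivalence of such solutions I would conclude $\phi_0\asymp\phi$ and $\phi_0^*\asymp\phi^*$ in $\K^c$, and therefore $\phi_0\phi_0^*\asymp\phi\phi^*$ in $\K^c$.

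It then remains to apply Theorem~\ref{thm-nonsym} to the critical operator $P_0$, with ground states $\phi_0,\phi_0^*$ and Hardy-weight $W$ in $K^c$, which gives $W\in L^1(\K^c,\phi_0\phi_0^*\dx)$ for every $\K$ with $K\Subset\K\Subset\Gw$. Combining this with the equivalence $\phi_0\phi_0^*\asymp\phi\phi^*$ in $\K^c$ yields the claim $W\in L^1(\K^c,\phi\phi^*\dx)$.

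I expect the main obstacle to be the first step, and within it the identification $\phi_0\asymp\phi$, $\phi_0^*\asymp\phi^*$ near infinity. The existence of the critical value $t_0$ rests on the standard fact that for subcritical $P$ and $0\lneqq V_0\in C_c^\infty(\mathrm{int}\,K)$ the pencil $t\mapsto P-tV_0$ remains nonnegative up to a finite value at which it turns critical; the equivalence of the ground states with the prescribed minimal-growth solutions rests on the uniqueness, up to multiplicative equivalence, of positive solutions of minimal growth at infinity. Both are cornerstones of linear criticality theory, but they must be quoted in the precise form needed (in particular with the perturbation supported inside $K$ so that nothing changes near infinity); once these are in place the remaining steps are purely formal.
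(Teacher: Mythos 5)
Your proposal is correct and follows essentially the same route as the paper, which states that the proof of Theorem~\ref{prop-P2} is obtained by repeating the argument of Theorem~\ref{prop-L2}: perturb $P$ by a nonnegative potential compactly supported in $K$ to reach a critical operator, identify its ground states with the given minimal-growth solutions $\phi,\phi^*$ near infinity via uniqueness up to equivalence, and apply Theorem~\ref{thm-nonsym}. The points you flag as needing care (finiteness of the critical coupling $t_0$ and the comparability of minimal-growth solutions) are exactly the ingredients the paper invokes in the quasilinear analogue.
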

\begin{proof}
The proof is similar to the proof of Theorem~\ref{prop-L2}, and therefore it is omitted. 
\end{proof}

\medskip

\noindent Similar to the quasilinear case, we have the following consequence of Theorem~\ref{thm-nonsym}.  
\begin{corollary}\label{Idan2}
	Let $P$ be a subcritical elliptic operator of the form \eqref{operator} (or  \eqref{op-P}), and let $W\gneqq0$ be a Hardy-weight for $P$ in $\Gw$. If $P-W$ is null-critical with respect to $W$ in $\Gw$, then $W$ is also optimal at infinity in the sense of Definition~\ref{def-opt}.
\end{corollary}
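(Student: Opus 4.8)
The plan is to mirror the structure of Corollary~\ref{Idan1}, replacing the quasilinear criticality input with the linear version supplied by Theorem~\ref{thm-nonsym}. First I would argue by contradiction: assume $W$ is \emph{not} optimal at infinity in the sense of Definition~\ref{def-opt}. By the (Optimality at infinity) clause this means there is some $\gl>1$ such that $P-\gl W\geq 0$ in a neighborhood of infinity $\Gw_\infty$ in $\Gw$; equivalently, $P-\gl W$ admits a positive supersolution there, so by the linear criticality/subcriticality framework of Lemma~\ref{lem-crit} and Definition of nonnegativity, $P-\gl W\geq 0$ in $\Gw_\infty$.

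Next I would localize. Choose a compact set $K\Subset\Gw$ with smooth boundary and positive measure such that $K^c:=\Gw\setminus K\subset\Gw_\infty$. On $K^c$ we then have $P-\gl W\geq 0$, which I would rewrite as
\begin{equation*}
(P-W)-(\gl-1)W\geq 0\qquad\text{in }K^c.
\end{equation*}
Since $\gl-1>0$, this exhibits $(\gl-1)W\gneqq 0$ as a Hardy-weight for the operator $P-W$ in $K^c$. The operator $P-W$ is critical in $\Gw$ by the (Criticality) hypothesis, and by Lemma~\ref{lem-crit}(1) so is $(P-W)^*=P^*-W$; denote by $\phi$ and $\phi^*$ the ground states of $P-W$ and $P^*-W$ respectively.

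Now I would apply Theorem~\ref{thm-nonsym} to the \emph{critical} operator $P-W$ together with its Hardy-weight $(\gl-1)W$ on $K^c$. For any compact $\K$ with $K\Subset\K\Subset\Gw$ the theorem yields $(\gl-1)W\in L^1(\K^c,\phi\,\phi^*\dx)$, hence $W\in L^1(\K^c,\phi\,\phi^*\dx)$. On the complementary compact piece, $W\in L^1(\K,\phi\,\phi^*\dx)$ follows from the local integrability of $W$ against the locally bounded ground states (or directly from the Hardy inequality $(P-W)\geq 0$ tested near $\K$, as in the closing lines of Corollary~\ref{Idan1}). Adding the two contributions gives $W\in L^1(\Gw,\phi\,\phi^*\dx)$, which directly contradicts the (Null-criticality) hypothesis $W\not\in L^1(\Gw,\phi\phi^*\dx)$ in Definition~\ref{def-opt}. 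The contradiction shows that $W$ must be optimal at infinity. The only delicate point, and the step I expect to require the most care, is the passage from ``$P-\gl W\not\geq 0$ in every neighborhood of infinity'' being the \emph{negation} of optimality to producing a \emph{single} neighborhood $\Gw_\infty$ on which $P-\gl W\geq 0$ for a fixed $\gl>1$; this is exactly the reading used in Corollary~\ref{Idan1}, and one must ensure the chosen $K^c$ sits inside that fixed neighborhood so that the localized Hardy inequality is legitimate.
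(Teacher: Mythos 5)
Your proposal is correct and follows essentially the same route as the paper: argue by contradiction, observe that $P-\gl W\geq 0$ near infinity means $(\gl-1)W$ is a Hardy-weight for the critical operator $P-W$ in $K^c$, and invoke Theorem~\ref{thm-nonsym} to get $W\in L^1(\K^c,\phi\,\phi^*\dx)$, contradicting null-criticality. Your extra remark on absorbing the compact piece $\K$ (where $W\phi\phi^*$ is integrable by local regularity) is a detail the paper leaves implicit, but it is the same argument.
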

\begin{proof}
	Let $\phi$ and $\phi^*$ be the ground states, in $\Omega$,  of $P-W$ and $P^*-W$, respectively. 
	Let $K \Subset \Gw$ be a compact set of positive measure with smooth boundary, and assume to the contrary that for some $\gl>1$ the operator $P-\gl W$ is nonnegative in $\Gw\setminus K$. In other words, $(\gl-1)W$ is a Hardy-weight for  $P-W$ in $\Gw\setminus K$.  Then it follows from Theorem~\ref{thm-nonsym} that  for any $\K$ such that $K\Subset \K\Subset \Gw$ we have $W\in L^1( \K^c, \phi\,\phi^* \dx)$, but this contradicts the null-criticality of $P-W$ with respect to $W$. 
	\end{proof}

\begin{remark}
	{\em 
The integrability conditions in the present section are only necessary conditions for $W$ to be a Hardy-weight, as the following elementary example demonstrates.

Consider the operator $P:=-\Gd$ on $\R^N$, where $N\geq 2$ and the weight $W:=1$. Then a positive harmonic function $u$ of minimal growth at infinity in $\R^N$ satisfies 
$u(x)\asymp |x|^{2-N}$, which is in $L^2(\overline{B_R(0)}^c)$ for $R$ sufficiently large if and only if $N\geq 5$. On the other hand, in {\em any} dimension, $W=1$ is not a Hardy-weight for the Laplacian in $\overline{B_R(0)}^c$. 

For sufficient conditions, as well as necessary and sufficient conditions for $W$ to be a Hardy-weight in the {\em linear} case, see \cite{p99} and references therein. }
\end{remark}

\begin{remark}
{\em 
Since criticality theory and in particular, the results concerning Hardy-type inequalities are valid in the setting of second-order elliptic operators on noncompact Riemannian manifolds \cite{dfp,dp,p07}, it follows that the results of the paper hold true also when $\Gw$ is a noncompact Riemannian manifold of dimension $N\geq 2$. }
\end{remark}



\section{\bf Examples} 
\label{s-examples}
\subsection{Example 1}\label{ex1} Consider the case  $\Gw =\R^N$ and $L(A,V) = -\Delta_p$, i.e. the $p$-Laplacian. So, $A= \id$ is the identity matrix, $V=0$ and assume that  $p\geq N\geq 2$. It is well-known that $ -\Delta_p$ is critical in $\R^N$ if and only if $p\geq N$, and therefore, $\phi=1$ is its ground state. 

\smallskip

\noindent We first consider the example $p= N$ and $K:= \overline{B_R(0)}$. Theorem~\ref{thm_opt_hardym} with  $\mathcal{G}(x):=\log \big(|x| /R\big)$ directly implies
the following Leray-type inequality (cf. \cite[Example~13.1]{dfp}).
\begin{lemma} \label{lem-hardy}
Let $K = \overline{B_R(0)} \subset \R^N$ and let $p=N>1$. Then for any $\vgf \in C_c^\infty(K^c)$ it holds 
\begin{equation} \label{hardy-lp}
\int_{K^c} |\nabla \vgf|^N\dx  \geq \left(\frac{N-1}{N}\right)^N \int_{K^c} \frac{|\vgf|^N}{ |x|^N \big(\log (|x| /R) \big)^N}\dx .
\end{equation}
Moreover, $$
W(x) := \left(\frac{N-1}{N}\right)^N\frac{1}{ |x|^N  \big(\log (|x| /R) \big)^N}
$$
is an optimal Hardy-weight for $-\Gd_p$ in $K^c$ with  $v(x)\!:=\!\log^{\frac{N-1}{N}}\!(|x| /R)$ being the ground state for the operator $L(\id, -W)$ in $K^c$.
\end{lemma}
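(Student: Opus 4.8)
The plan is to apply Theorem~\ref{thm_opt_hardym} directly, with the ambient domain $\Omega=\R^N$, the excised subdomain $U=B_R(0)$ (so that $\tilde\Omega=K^c$), and the explicit candidate
$$\mathcal{G}(x):=\log(|x|/R).$$
Since $B_R(0)$ is smooth, both $U$ and $K^c$ are $C^{1,\alpha}$ domains, so the geometric hypotheses of the theorem are automatically met; it then remains only to verify the three analytic conditions on $\mathcal{G}$ and to read off $W$ and $v$ from the case-(b) formulas.

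First I would check that $\mathcal{G}$ is a nonconstant positive $N$-harmonic function in $K^c$. Positivity and nonconstancy are immediate, since $\log(|x|/R)>0$ for $|x|>R$. For $N$-harmonicity I would use the radial form of the $p$-Laplacian: for $u=g(|x|)$,
$$-\Delta_p u=-r^{1-N}\frac{\mathrm{d}}{\mathrm{d}r}\big(r^{N-1}|g'(r)|^{p-2}g'(r)\big),\qquad r=|x|.$$
With $p=N$ and $g(r)=\log(r/R)$ one has $g'(r)=1/r>0$, so $r^{N-1}|g'|^{N-2}g'=r^{N-1}r^{-(N-1)}=1$ is constant in $r$; hence $-\Delta_N\mathcal{G}=0$ in $K^c$. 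Next I would record the boundary limits: as $x\to\partial U$ (i.e. $|x|\to R$) we get $\mathcal{G}\to 0$, so $\gamma_1=0$, while as $x\to\bar\infty$ (i.e. $|x|\to\infty$) we get $\mathcal{G}\to\infty$, so $\gamma_2=\infty$. Thus $\gamma_1\neq\gamma_2$ and both limits lie in $[0,\infty]$, so condition \eqref{assumpt_7m} holds with $m=\min\{\gamma_1,\gamma_2\}=0$ and $M=\max\{\gamma_1,\gamma_2\}=\infty$. Since $M=\infty$ we land in case (b) of the theorem, which imposes no extra restriction on $p$.

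Finally I would substitute into the case-(b) formulas. Here $v_1=\mathcal{G}-m=\log(|x|/R)$ and $v=v_1^{(N-1)/N}=\log^{(N-1)/N}(|x|/R)$, which is exactly the asserted ground state. Using $\nabla\mathcal{G}=x/|x|^2$, so that $|\nabla\mathcal{G}|=1/|x|$, the weight becomes
$$W=\left(\frac{N-1}{N}\right)^N\left|\frac{\nabla\mathcal{G}}{v_1}\right|^N=\left(\frac{N-1}{N}\right)^N\frac{1}{|x|^N\big(\log(|x|/R)\big)^N},$$
matching the claim. Theorem~\ref{thm_opt_hardym} then delivers inequality \eqref{hardy-lp}, the optimality of $W$ for $-\Delta_N$ in $K^c$, and the fact that $v$ is, up to a multiplicative constant, the unique positive supersolution of $L(\id,-W)w=0$ in $K^c$; by Theorem~\ref{theorem:main} this supersolution is the ground state, which completes the identification.

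All the computations are elementary, so there is no serious obstacle here. The only points demanding a little care are confirming the $N$-harmonicity of $\log(|x|/R)$ — i.e. that the flux $r^{N-1}|g'|^{N-2}g'$ is genuinely constant precisely because $p=N$ — and checking that the exterior of a ball literally satisfies the $C^{1,\alpha}$ domain and boundary-limit hypotheses of Theorem~\ref{thm_opt_hardym}, so that the theorem may be invoked verbatim.
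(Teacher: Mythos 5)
Your proposal is correct and follows exactly the paper's route: the paper proves the lemma by the single remark that Theorem~\ref{thm_opt_hardym} applied with $\mathcal{G}(x)=\log(|x|/R)$ (case (b), since $\gamma_1=0$, $\gamma_2=\infty$) directly yields \eqref{hardy-lp}, the optimality of $W$, and the ground state $v$. Your verification of the $N$-harmonicity of $\log(|x|/R)$ and the explicit computation of $v_1$, $v$, and $W$ merely fills in the elementary details the paper leaves implicit, and they are all accurate.
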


\begin{remark}{\em 
Inequality \eqref{hardy-lp} is well-known (see e.g.~\cite[Thm.~1.14]{ok}), while the optimality of $W$ follows from Theorem~\ref{thm_opt_hardym}. Note also that in the linear case $p=2$ inequality \eqref{hardy-lp} can be generalized, with suitable modifications, to Laplace operators with Robin boundary conditions on $\partial K$, see \cite{km}.
}
\end{remark}

\noindent
Here the optimal Hardy-weight $W$ is not in $L^1(K^c,\!\dx)$, but Theorem~\ref{prop-L1} implies that $W\in L^1(\K^c,\!\dx)$ for any $\K$ such that $K\Subset \K\Subset \R^N$. 
Indeed, for  any $\rho>R$  such that $\overline{B_\rho(0)} \Subset \K$ we have
$$ 
\int_{\K^c} W \dx \leq \int_{B_\rho(0)^c} \frac{1}{ |x|^N\big(\log (|x| /R) \big)^N}\, \dx \ \asymp\  \int_{\rho/R}^\infty \frac{\dr}{r (\log r )^N} \, < \, \infty.
$$

\noindent Theorem~\ref{prop-L1} now implies that the logarithmic factor in \eqref{hardy-lp} cannot be removed. This is in contrast with the well-known (optimal) Hardy inequality 
\begin{equation} \label{hardy-np}
\int_{\R^N} |\nabla \vgf|^p\dx \ \geq \ \left|\frac{N-p}{p}\right|^p \int_{\R^N} \frac{|\vgf|^p}{ |x|^p}\dx \qquad \forall \vgf\in C_c^\infty(\R^N\setminus\{0\}),
\end{equation}
which holds for $N\neq p$ (see for example, \cite[Example~4.7]{dp}).  Note also that by the optimality of $W$ it follows that the weight function $W$
satisfies  $\int_{\K^c} |v|^{N} \, W\dx=\infty$, where $v(x)=\log^{\frac{N-1}{N}}\!(|x| /R)$, c.f. \eqref{eq-dp}. Indeed, we have
\begin{equation} 
 \int_{\K^c} |v|^{N} \, W\dx= \left(\frac{N-1}{N}\right)^N \int_{\K^c} \frac{1}{ |x|^N \log (|x| /R)}\dx\ \asymp\ \int_{\delta}^\infty \frac{\dr}{r\log r} =\infty,
\end{equation} 
for some $\delta>1$.  

\medskip

\noindent In the case $p>N$, we apply Theorem~\ref{thm_opt_hardym} with $\mathcal{G}(x):=|x|^{(p-N)/(p-1)}$. This gives
$$
W(x):=\left(\frac{p-N}{p}\right)^p\frac{1}{ |x|^p  \big(1- (R/|x|)^{(p-N)/(p-1)} \big)^p}\,.
$$
In agreement with Theorem~\ref{prop-L1}, we have 
$$
\int_{\K^c} W \dx \ <\,  \infty,
$$
for any $\K$ such that $\overline{B_R(0)} \Subset \K$.

\subsection{Example 2}\label{ex2} Let $\Gw= \R^N\setminus\{0\}$, $2\leq p=(N+1)/2$, $A= \id$, and 
$$
V(x) = -C_{p,N} \frac{1}{|x|^p}\,, \quad \mbox{where } \ \  C_{p,N}: = \left|\frac{N-p}{p}\right|^p =C_{p,1} \quad \mbox{ is the Hardy constant}.
$$
Here we have 
$$L(A,V)u=Lu= -\Delta_p u +V |u|^{p-2}u=-\Delta_p u - \left|\frac{N-p}{p}\right|^p \frac{|u|^{p-2}u}{|x|^p}\,,$$ 
and the optimality of the integral weight in \eqref{hardy-np} implies in particular, that the operator $L$ is critical in $\Gw$. A straightforward calculation shows that 
$\phi(x) = |x|^{(p-N)/p}$ is the ground state for $L$ in $\Gw$. The null-criticality, implies that  
\begin{equation*} 
\int_{\K^c} |\phi|^{p} \,|V|\dx= C_{p,N}\int_{\K^c} \frac{|x|^{p-N}}{ |x|^p }\dx\ \asymp\ \int_{\delta}^\infty \frac{\dr}{r} =\infty.
\end{equation*} 

\noindent On the other hand, if we set $K = \overline{B_R(0)}$, then 
\begin{equation} \label{hardy-r3}
\int_{K^c} |\nabla \vgf|^p\dx  \geq  C_{p,1} \int_{K^c} \frac{|\vgf|^p}{ (|x|-R)^p}\dx  \qquad \forall \vgf\in C_c^\infty(K^c),
\end{equation}
where the constant $C_{p,1}$ is optimal, see e.g.,~\cite[Section~4]{mmp} and \cite{lp}. Since $C_{p,N} =C_{p,1}$, it follows that
$$
\int_{K^c} \left( |\nabla \vgf|^p - C_{p,1} \frac{|\vgf|^p}{ |x|^p}\right)\dx \geq \int_{K^c} W(x) \vgf^p\dx \qquad \forall \vgf\in C_c^\infty(K^c),
$$
with the Hardy-weight
$$
W(x) := C_{p,1} \left(\frac{1}{ (|x|-R)^p}- \frac{1}{|x|^p}\right)\ > 0 \qquad \mbox{in } K^c .
$$
It is now easy to verify that 
$$
\int _{\K^c} W|\phi|^p\dx < \infty,
$$
for any $K\Subset \K\Subset\R^N$.
\subsection{Example 3}\label{ex3} Let $L$, $\Gw$, and $K$ be as in Example~2 with $p=2$ and $N\geq3$. Let $\phi(x) = |x|^{(2-N)/2}$ and $\psi(|x|):=\phi(|x|) \log(|x|/R)$.
Using the supersolution construction with $\phi$ and $\psi$ in $K^c$, we obtain the following optimal Hardy inequality in $K^c$
$$
\int_{K^c} \left( |\nabla \vgf|^2 - \left(\frac{N-2}{2}\right)^2\frac{|\vgf|^2}{ |x|^2}\right)\dx \geq \int_{K^c} W|\vgf|^2\dx \qquad \forall \vgf\in C_c^\infty(K^c),
$$
with ground state $\phi_K:=(\phi\psi)^{1/2}$, and
$$
W(x) := \frac{1}{ 4|x|^2  \big(\log (|x| /R) \big)^2}\ > 0.
$$ 
Recall that $\phi$ is a ground state of the critical operator $L$ in $\Gw$. It is now easy to verify that as claimed in Theorem~\ref{prop-L1}  for any $K\Subset \K\Subset\R^N$ and any $\rho >R$ such that $B_\rho(0) \Subset \K$ we have 
$$
\int _{\K^c} W|\phi|^2\dx\ \asymp\ \int_{\delta}^\infty \frac{\dr}{r(\log r)^2} < \infty,
$$
 where $\delta = \rho/R >1$.  On the other hand, the optimality of $W$  in $K^c$ implies that  there exists $\delta>1$ such that
$$
\int _{\K^c} W|\phi_K|^2\dx=\int _{\K^c} W|\phi|^2\log (|x| /R)\dx\ \asymp  \ \int_{ \delta}^\infty \frac{\dr}{r\log r} = \infty,
$$
demonstrating the sharpness of Theorem~\ref{prop-L1}.

\subsection{Example 4} \label{ex4}
Let $\Gw =\R^N$, $A= \id$ the identity matrix, $V=0$ and $p<N$. In this case we have that $L(A,V) = -\Delta_p$ is subcritical operator in $\R^N$. The fundamental solution $|x|^{(p-N)/(p-1)}$ with a singular point at the origin  
is a positive $p$-harmonic function of minimal growth at infinity in $\R^N$.
Assume further that $N\geq 3$. Then the following Hardy inequality holds true with a subcritical Hardy-weight
\begin{equation} \label{hardy-np4}
\int_{\R^N} |\nabla \vgf|^p\dx \ \geq \ \left|\frac{N-p}{p}\right|^p \int_{\R^N} \frac{|\vgf|^p}{ 1+|x|^p}\dx \qquad \forall \vgf\in C_c^\infty(\R^N).
\end{equation}
By Theorem~\ref{prop-L2}, we have 
$$\int_{|x|>R} \frac{\Big(|x|^{(p-N)/(p-1)}\Big)^p}{ 1+|x|^p}\dx\ \asymp\  \int_R^\infty r^{(1-N)/(p-1)}\dr<\infty.$$
The optimality at infinity of the Hardy-weight $C_{p,N}|x|^{-p}$ in $\R^N\setminus \{0\}$ with a ground state $|x|^{(p-N)/p}$ implies, on the other hand, that
 $$
 \int_{|x|>R} \frac{\Big(|x|^{(p-N)/p}\Big)^p}{|x|^p}\dx \ \asymp\  \int_R^\infty r^{-1}\dr=\infty.
 $$

\subsection{Example 5}(cf. \cite{bft,bft1,d})\label{ex5} 
Let $\Gw$ be a $C^{1,\ga}$-bounded domain in $\R^N$. and let $A= \id$ be the identity matrix, and $p=2$. Fix $x_0\in \Gw$, and denote $G(x):=G_{-\Gd}^\Gw(x,x_0)$. Let $K \Subset \Gw$ be a compact set of positive measure with smooth boundary and suppose that $K$ is large enough such that $G(x) <1$ in $K^c$. For $t \in (0,1)$ 
define 
$$
X_0(t) = 1,  \quad X_1(t) = (1-\log(t))^{-1},
$$
and
$$
X_{i+1}(t) = X_1(X_i(t)), \qquad \forall\, i\geq 1. 
$$
For $i\geq 0$, and $x\in K^c$, let 
$$
W_i(x):=\frac{|\nabla G(x)|^2}{4G^{2}(x)}\ \sum_{k=0}^i \ \prod_{j=0}^k X^2_j(G(x)) ,\qquad u_i(x):= \left(G(x)\prod_{j=0}^i X_j(G(x))^{-1}\right)^{1/2},
$$ and let $W_i=0$ in $K$.     
Then $W_i$ is a Hardy-weight for $-\Gd$ in $\Gw$ (obtained by the supersolution construction), and $u_{i}$ is a positive solution of the equation $$(-\Gd-W_i)u=0$$ 
of minimal growth in a neighborhood of  infinity in $\Gw$. Let
$$
R_i(x)=W_{i+1}(x)-W_{i}(x) = \frac{|\nabla G(x)|^2}{4G^{2}(x)}\ \prod_{j=0}^{i+1} X^2_j(G(x)).
$$
Then $R_i$ is a Hardy-weight for $-\Delta-W_i$ in $K^c$ and a straightforward calculation shows that 
\begin{align*}
\int_{K^c}u_i^2(x)\, R_i(x) \dx & =  \int_{K^c}  \frac{|\nabla G(x)|^2}{4G(x)} \,  \prod_{j=0}^{i} X_j(G(x))\,  X^2_{i+1}(G(x))\, \dx \\
& \, \asymp\, \int_0^1  \prod_{j=0}^{i} X_j(t)\,  X^2_{i+1}(t)\, \frac{\dt}{t}   \asymp\, \int_1^\infty \frac{1}{(1+\log t)^2} \, \frac{\dt}{t} \ < \  \infty,
\end{align*}
in agreement with Theorem \ref{prop-L2}. 
\subsection{Example 6}(cf. \cite{lp})\label{ex6}
 Let $\Gw$ be a $C^{1,\ga}$-bounded domain in $\R^N$, and fix $1<p<\infty$. Let $\gd_\Gw:\Gw\to \R_+$ be the distance function to $\partial \Gw$. Then there exists $\Gw'\subset \Gw$, a neighborhood of infinity in $\Gw$ such that  the following Hardy inequality holds  
 \begin{equation} \label{hardy-r5}
 \int_{\Gw'} |\nabla \vgf|^p\dx  \geq  C_{p,1} \int_{\Gw'} \frac{|\vgf|^p}{\gd_\Gw^p}\dx  \qquad \forall \vgf\in C_c^\infty(\Gw').
 \end{equation}
 A positive solution $u$ for $L(\id,-C_{p,1}\gd_\Gw^{-p})$ of minimal growth at infinity in $\Gw$ behaves like $\gd_\Gw^{(p-1)/p}$ (see  \cite{lp}). Hence, for $W:=C_{p,1}\gd_\Gw^{-p}$ we obtain
 $\int_{\Gw'}  W\left(\gd_\Gw^{(p-1)/p}\right)^p\dx=\infty$.
In particular, by Theorem~\ref{prop-L2} we have
$$
\lambda_\infty(-\Gd_p, \gd_\Gw^{-p},\Omega):=\sup\big \{\gl\in\Real\mid \exists K\Subset
\Gw\mbox{ s.t. } -\Gd_p-\gl \gd_\Gw^{-p}\geq 0 \; \mbox{ in }  \Gw\setminus K \big\}= C_{p,1}. 
$$

 
On the other hand, let $\gl<C_{p,1}$, then $W_\gl:=(C_{p,1}-\gl)(\gd_\Gw)^{-p}$ is a Hardy-weight for the subcritical operator $L_\gl:=L(\id,-\gl \gd_\Gw^{-p})$ in $\Gw'$.    
Any positive solution $v$ for $L_\gl$ of minimal growth at infinity in $\Gw$ behaves like $\gd_\Gw^{\ga(\gl)}$, where $\ga(\gl)>(p-1)/p$ is a solution of the transcendental equation
$\lambda=(p-1)\alpha ^{p-1}(1-\alpha)$, see \cite{lp}. Note that $\ga(\gl)\to (p-1)/p$ as $\gl\to C_{p,1}$. Hence, we have
  $\int_{\Gw'}  W_\gl\left(\gd_\Gw^{\ga(\gl)}\right)^p\dx<\infty$, demonstrating again the sharpness of Theorem~\ref{prop-L2}.


\bigskip

\begin{center}{\bf Acknowledgments} \end{center}
The authors are grateful to Idan Versano for pointing out to them corollaries~\ref{Idan1} and \ref{Idan2}, and to Georgios Psaradakis for a useful discussion.
The authors thanks the departments of Mathematics at the Technion and at the Universit\`{a} degli studi di Brescia for the hospitality during their mutual visits Y.~P. acknowledges the support of the Israel Science Foundation (grant 970/15) founded by the Israel Academy of Sciences and Humanities. H.~K.  acknowledges the support of the Project FFABR of the Italian Ministry of Education.

\end{document}